\documentclass[11pt]{amsart}


\usepackage[colorlinks=true, pdfstartview=FitV, linkcolor=blue, 
citecolor=blue]{hyperref}

\usepackage{amssymb,amsmath,amscd}
\usepackage{accents}
\usepackage{stackrel}
\usepackage{graphicx}
\usepackage{bbm}
\usepackage{a4wide}
\usepackage{epsfig,psfrag}  
\usepackage{pstricks}
\usepackage{pstricks-add}
\usepackage{faktor}
\usepackage{amsthm}
\usepackage{comment}
\usepackage{stmaryrd}

\makeatletter
\newcommand\xleftrightarrow[2][]{%
  \ext@arrow 9999{\longleftrightarrowfill@}{#1}{#2}}
\newcommand\longleftrightarrowfill@{%
  \arrowfill@\leftarrow\relbar\rightarrow}
\makeatother

\theoremstyle{plain}
\newtheorem{theorem}{Theorem}

\newtheorem{lemma}[theorem]{Lemma}
\newtheorem{coro}[theorem]{Corollary}

\theoremstyle{definition}
\newtheorem{remark}[theorem]{Remark}
\newtheorem{example}[theorem]{Example}
\newtheorem{deff}[theorem]{Definition}

\newcommand{\ts}{\hspace{0.5pt}}
\newcommand{\nts}{\hspace{-0.5pt}}

\newcommand{\RR}{\mathbb{R}\ts}

\newcommand{\EE}{\mathbb{E}}

\newcommand{\cA}{\mathcal{A}}
\newcommand{\cB}{\mathcal{B}}
\newcommand{\cC}{\mathcal{C}}

\newcommand{\cG}{\mathcal{G}}

\newcommand{\cP}{\mathcal{P}}
\newcommand{\cQ}{\mathcal{Q}}
\newcommand{\cR}{\mathcal{R}}
\newcommand{\cS}{\mathcal{S}}

\newcommand{\cX}{\mathcal{X}}

\newcommand{\bP}{\boldsymbol{P}}

\newcommand{\pmin}{\ts\ts\underline{\nts\nts 0\nts\nts}\ts\ts}
\newcommand{\pmax}{\ts\ts\underline{\nts\nts 1\nts\nts}\ts\ts}

\newcommand{\RNum}[1]{\uppercase\expandafter{\romannumeral #1\relax}}
\newcommand{\trans}{{\raisebox{1pt}{$\scriptscriptstyle\mathsf{T}$}}}

\newcommand{\defeq}{\mathrel{\mathop:}=}

\newcommand{\ee}{\mathrm{e}}

\newcommand{\dd}{\, \mathrm{d}}

\newcommand{\udo}[1]{\underaccent{$\text{.}$}{#1\ts}\nts}

\newcommand{\todo}[1]{{\tt TODO:#1}}

\begin{document}

\definecolor{lil}{rgb}{.44,.26,.44} 
\definecolor{lila}{rgb}{.01,.01,.46} 
\definecolor{blo}{cmyk}{.1,.1,.01,.1}
\definecolor{gre}{rgb}{.55,.78,0} 
\definecolor{gre}{rgb}{.06,.49,0.03} 
\definecolor{ora}{rgb}{.84,.40,0} 

\title[Genetic recombination as a Generalised Gradient Flow] {Genetic recombination as a Generalised Gradient Flow}

\author{Frederic Alberti}

\address{Fakult\"at f\"ur Mathematik, Universit\"at Bielefeld, \newline
\hspace*{\parindent}Postfach 100131, 33501 Bielefeld, Germany}
\email{falberti@math.uni-bielefeld.de}

\begin{abstract}
It is well known that the classical recombination equation for two parent individuals is equivalent to the law of mass action of a strongly reversible chemical reaction network, and can thus be reformulated as a generalised gradient system. 
Here, this is generalised to the case of an arbitrary number of parents. Furthermore, the gradient structure of the backward-time partitioning process is investigated.
\end{abstract}

\maketitle

\section{Introduction}
\label{sec:introduction}

Genetic recombination describes the reshuffling of genetic information that occurs during the reproductive cycle of (sexual) organisms; it is one of the major mechanisms that maintain genetic diversity within populations. One of the standard models for its description is the \emph{deterministic recombination equation in continuous time}, in the following simply referred to as \emph{recombination equation}; for background, see~\cite{buerger}.
This equation describes the evolution of the distribution of types in a (haploid) population under the assumption of random mating, while neglecting stochastic fluctuations.  

In the case of finite sets of alleles at an arbitrary (but finite) number of sites, the recombination equation was reinterpreted by~\cite{muellerhofbauer} as the law of mass action of a network of chemical reactions between gametes. This reaction network was shown to be strongly reversible and general theory~\cite{mielke,yong} on chemical reaction networks thus implies that it admits a representation in terms of a generalised gradient system, with respect to entropy~\cite{hofbauer}. This strengthens an earlier result by Akin~\cite[Thm.~\RNum{3}.2.5]{akin} that entropy is a strong Lyapunov function for recombination; a somewhat weaker statement can be found in~\cite[Thm. 6.3.5]{lyubich}.

A generalised version of the model, which allows for a more general reproduction mechanism (involving an arbitrary number of parents) as well as more general (not necessarily discrete) type spaces, was considered in~\cite{baakebaakesalamat}. There, the authors reduced the original measure-valued, infinite-dimensional equation via a suitable ansatz function to a finite-dimensional, albeit still nonlinear, system. This system was then analysed using lattice-theoretic techniques, leading to an explicit recursion formula for its solution. A somewhat different, but simpler approach can be found in~\cite{baakebaake}, which relates the evolution of the type distribution, forward in time, to an ancestral partitioning process backward in time. More precisely, the solution of the recombination equation is expressed in terms of the solution of the (linear) differential equation for the law of the partitioning process.

One obvious question is now whether this more general model can also be represented as a strongly reversible chemical reaction network, and, consequently, as a generalised gradient system like the more classical version, which involves only two parents and finite type spaces. 

We shall see that, in the case of finite type spaces, the answer is affirmative, generalising the results of M\"uller and Hofbauer~\cite{muellerhofbauer} to the multi-parent case. In addition, we will that the dynamics of the law of the partitioning process~\cite{baakebaake,baakebaakereview}, which is independent of the type space, can be rewritten as as a generalised (linear) gradient system. 

Finally, we reconsider the finite-dimensional, nonlinear system from~\cite{baakebaakesalamat} and show that it, too, can be rewritten in terms of a law of mass action of a chemical reaction network, which structurally resembles the one mentioned above for finite type spaces; however, it is not clear whether it is indeed a gradient system, due to the loss of reversibility incurred by the loss of information when transitioning from types to partitions.

The paper is organised as follows. First, we recall and explain the recombination equation for multiple parents. The connection to chemical reaction networks is established in Section~\ref{sec:networks}, for finite type spaces. Section~\ref{sec:gradients} contains the results on the gradient structure of this network, and Section~\ref{sec:markov} explains the gradient structure of a particular class of Markov chains, which covers the partitioning process. Finally, Section~\ref{sec:nonlinearpartitioning} contains the reformulation of the nonlinear system from~\cite{baakebaakesalamat} as a law of mass action.

\section{The recombination equation}
\label{sec:recoeq}
For our purposes, a \emph{genetic type} is a sequence
$
x = (x_1,\ldots,x_n) \in X \defeq  X_1 \times \cdots \times X_n
$
of fixed length $n$, where $X_1,\ldots,X_n$ are locally compact Hausdorff spaces and the evolution of the \emph{$($gametic$)$ type distribution} of the population is modelled as a differentiable one-parameter family $\omega = (\omega_t^{})_{t \geqslant 0}$ of (Borel) probability measures on $X$. 
This generality is useful in the context of quantitative genetics, when modelling the evolution of quantitative, polygenic traits such as body size, brain volume, growth rate or milk production; cf.~\cite[Ch. IV]{buerger}.

To understand the dynamics of $\omega$, let us start on the level of individuals. When two or more parents jointly produce an offspring, a partition of $S := \{1,\ldots,n\}$, the set of genetic \emph{sites} (or \emph{loci}), describes how the type of that offspring is pieced together from the types of its parents; recall that a partition of a set $M$ is a collection of pairwise disjoint, non-empty subsets, called \emph{blocks}, whose union is $M$; whenever we enumerate the blocks of a partition of $S$, that is, whenever we write
$
\cA = \{A_1,\ldots,A_{|\cA|}^{}\},
$
where $|\cA|$ is the number of blocks in $\cA$, we order the blocks such that $\cA_1$ is the block that contains $1$ and, for all $2 \leqslant k \leqslant |\cA|$, $A_k$ is the block that contains the smallest element not contained in $\bigcup_{j = 1}^{k-1} A_j$. 

Formally, given $k$ parent individuals of types $x^{(1)},\ldots,x^{(k)}$, (where $x^{(i)} = \big (x^{(i)}_1,\ldots,x^{(i)}_n \big)$ for all $1 \leqslant i \leqslant k$), and a partition $\cA = \{A_1,\ldots,A_k \}$ of $S$ into $k$ blocks, the type of the offspring is
\begin{equation} \label{exampleoffspring}
y = \pi_{A_1}^{} \big (x^{(1)} \big) \sqcup \ldots \sqcup \pi_{A_k}^{} \big (x^{(k)} \big).
\end{equation}
Here, for each $A \subseteq S$,  $\pi_{A}^{}$ is the projection
$
(x_i)_{i \in S} \mapsto (x_i)_{i \in A}
$
and the symbol $\sqcup$ is used to denote the joining of gene fragments, respecting the order of the sites; given pairwise disjoint subsets $U_1,\ldots,U_k$ of $S$ and sequences $x^{(\ell)}$ in $\pi^{}_{U_\ell} (X)$ with $1 \leqslant \ell \leqslant k$, we write
$
x^{(1)} \sqcup \ldots \sqcup x^{(k)} 
$
for the sequence indexed by $U_1 \cup \ldots \cup U_k$ that has at site $i$ the letter $x^{(j)}_i$, where $U_j$ is the unique subset that contains $i$. In particular, in Eq.~\eqref{exampleoffspring}, 
$y = (y_1,\ldots,y_n)$ where $y_i = x^{(j)}_i$ if $i \in U_j$. 

For any $U \subseteq S$, we denote by 
\begin{equation*}
X_U^{} \defeq \prod_{i \in U} X_i
\end{equation*}
the marginal type space with respect to $U$.


In the following, we denote by $\bP(S)$ the set of all partitions of $S$, not to be confused with the set $\cP(X)$ of all (Borel) probability measures on $X$.    

To express the effect of recombination on the type distribution in a concise way, we define for any $\cA \in \bP(S)$ a (nonlinear) operator on $\cP(X)$ by
\begin{equation*}
\cR_\cA (\nu) \defeq \bigotimes_{i=1}^{|\cA|}\pi^{}_{A_i} . \nu.
\end{equation*}
Here, $\bigotimes$ denotes measure product and the dot denotes the push-forward of probability measures; i.e.,
$
\pi_A^{} . \nu (E) \defeq \nu \big (\pi^{-1}_A (E) \big ),
$
for any Borel measurable subset $E \subseteq X_A^{}$ and $\nu \in \cP(X)$. The operators $\cR_\cA^{}$ are called recombinators~\cite{baakebaakecan}. Clearly, $\cR_\cA^{} (\nu)$ is the distribution of the type of the joint offspring of $|\cA|$ parents whose types are drawn independently from $\nu$, where the letters at two different sites $k$ and $\ell$ come from the same  parent if and only if $k$ and $\ell$ are in the same block of $\cA$; we call such an offspring \emph{$\cA$-recombined}.
If $X$ is finite, the recombinator can also be written as follows.
\begin{lemma}\label{vectorrecombinator}
Assume that $X$ is finite. Then,  for all $\nu \in \cP(X)$ and all \mbox{$\cA = \{A_1,\ldots,A_k\} \in \bP(S)$}, we have
\begin{equation*}
\cR_{\cA} (\nu) =  \sum_{x^{(1)}_{},\ldots,x^{(|\cA|)}_{} \in X} \nu \big (x^{(1)}_{} \big )\cdot \ldots \cdot \nu \big (x^{(|\cA|)}_{} \big) \delta_{\bigsqcup_{i = 1}^{|\cA|} \pi_{A_i}^{} (x^{(i)} )}^{}.
\end{equation*}
\end{lemma}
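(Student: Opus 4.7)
The plan is to unpack the definition of $\cR_\cA^{}$ by first writing each factor $\pi_{A_i}^{}.\nu$ as a sum of point masses, then expanding the measure product by multilinearity, and finally identifying the resulting product Dirac measures with single Dirac measures on $X$ via the natural bijection $X \cong \prod_{i=1}^{|\cA|} X_{A_i}^{}$ afforded by the partition $\cA$.

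First, since $X$ is finite, the measure $\nu$ is a convex combination $\nu = \sum_{x \in X} \nu(x) \delta_x^{}$, and therefore
\begin{equation*}
\pi_{A_i}^{}.\nu \;=\; \sum_{x \in X} \nu(x) \ts \delta_{\pi_{A_i}^{}(x)}^{}
\end{equation*}
for each $1 \leqslant i \leqslant |\cA|$, where the Dirac measure on the right is understood on $X_{A_i}^{}$. Next, because measure product is multilinear and distributes over (finite) sums, and because $\bigotimes_{i=1}^{|\cA|} \delta_{y^{(i)}}^{} = \delta_{(y^{(1)},\ldots,y^{(|\cA|)})}^{}$ on $\prod_{i=1}^{|\cA|} X_{A_i}^{}$, we obtain
\begin{equation*}
\cR_\cA^{}(\nu) \;=\; \bigotimes_{i=1}^{|\cA|} \pi_{A_i}^{}.\nu \;=\; \sum_{x^{(1)},\ldots,x^{(|\cA|)} \in X} \nu\big(x^{(1)}\big) \cdot \ldots \cdot \nu\big(x^{(|\cA|)}\big) \bigotimes_{i=1}^{|\cA|} \delta_{\pi_{A_i}^{}(x^{(i)})}^{}.
\end{equation*}

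Finally, since $\cA = \{A_1,\ldots,A_{|\cA|}^{}\}$ is a partition of $S$, the map $(y^{(1)},\ldots,y^{(|\cA|)}) \mapsto y^{(1)} \sqcup \cdots \sqcup y^{(|\cA|)}$ is a bijection from $\prod_{i=1}^{|\cA|} X_{A_i}^{}$ onto $X$, under which $\cR_\cA^{}(\nu)$ is to be viewed as a measure on $X$. Pushing the product Dirac measure through this bijection identifies $\bigotimes_{i=1}^{|\cA|} \delta_{\pi_{A_i}^{}(x^{(i)})}^{}$ with $\delta_{\bigsqcup_{i=1}^{|\cA|} \pi_{A_i}^{}(x^{(i)})}^{}$, which yields the claimed formula.

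There is no real obstacle here; the only point that warrants care is the bookkeeping of the identification $X \cong \prod_{i} X_{A_i}^{}$, which is implicit both in the definition of $\sqcup$ and in writing $\cR_\cA^{}(\nu) \in \cP(X)$ rather than as a measure on the formal product space.
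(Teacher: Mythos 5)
Your proof is correct and amounts to the same direct computation as the paper's: the paper evaluates the right-hand side pointwise at $y \in X$ and recognises the resulting factorised sum as $\prod_i \nu\big(\pi_{A_i}^{-1}(\pi_{A_i}^{}(y))\big)$, i.e.\ the product measure at $y$, whereas you run the identical expansion in the other direction via multilinearity of $\bigotimes$ applied to $\nu = \sum_x \nu(x)\delta_x^{}$. Both hinge on the same two facts (expansion in Dirac masses and the identification $X \cong \prod_i X_{A_i}^{}$ under $\sqcup$), so this is essentially the paper's argument.
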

\begin{proof}
Let us write $\tilde{\cR}$ for the map on $\cP(X)$ defined by the right-hand side. Then, for all $y \in X$,
\begin{equation*}
\tilde{\cR}(\nu) (y) = \sum_{\substack{x^{(1)}_{},\ldots,x^{(k)}_{} \in X \\ \pi^{}_{A_i} (x^{(i)}) = \pi^{}_{A_i}(y) \, \forall i}} \nu \big (x^{(1)}_{} \big )\cdot \ldots \cdot \nu \big (x^{(|\cA|)}_{} \big) = \prod_{i = 1}^k  \nu\big(\pi_{A_i}^{-1}(x)\big),
\end{equation*}
which implies the identity claimed.
\end{proof}

\begin{remark}\label{pointsandmeasures}
As expressions of the form
\begin{equation*}
\delta^{}_{\bigsqcup_{i=1}^k \pi_{A_i}^{} ( x^{(i)})}
\end{equation*}
are quite cumbersome, we simplify the notation by formally identifying each element $m$ in a finite set $M$ with the associated point (or Dirac) measure $\delta_m$. Under this convention, the statement from Lemma~2.2 reads 
\begin{equation*}
\cR_{\cA} (\nu) =  \sum_{x^{(1)}_{},\ldots,x^{(|\cA|)}_{} \in X} \nu \big (x^{(1)}_{} \big )\cdot \ldots \cdot \nu \big (x^{(|\cA|)}_{} \big) \bigsqcup_{i = 1}^{|\cA|} \pi_{A_i}^{} (x^{(i)} ). 
\end{equation*}
Put differently, we identify the vector space of finite signed measures on $M$ with the vector space $\RR^M$ of formal sums of its elements. Unless stated otherwise, all vectors are interpreted as column vectors. This entails  that the standard scalar product $\langle v, w \rangle$ of any two vectors $v$ and $w$ can be written as $v^\trans w$ (where $\trans$ denotes transposition) whereas $v w^\trans$ denotes the matrix that maps any other vector $u$ to $\langle w,u \rangle v$.
\hfill $\diamondsuit$
\end{remark}

Before we continue, we will need to recall from~\cite{baakebaake} a few additional notions around partitions. Given two partitions $\cA$ and $\cB$, we say that $\cA$ \emph{is finer than} $\cB$ and write $\cA \preccurlyeq \cB$ if and only if every block of $\cA$ is a subset of some block of $\cB$; this defines a partial order on $\bP(S)$, and we denote the unique minimal (maximal) element by $\pmin \defeq \{\{i\} \mid i \in S\}$ ($\pmax \defeq \{S\}$). The \emph{coarsest common refinement} of $\cA$ and $\cB$ is defined as
\begin{equation*}
\cA \wedge \cB := \{A \cap B \mid A \in \cA, B \in \cB\} \setminus \varnothing;
\end{equation*}
it is the largest (coarsest) element of $\bP(S)$ smaller (finer) or equal to both $\cA$ and $\cB$. Furthermore, given a partition $\cA$ of $S$ and some subset $U \subseteq S$, we denote by 
\begin{equation*}
\cA|_U := \{ A \cap U \mid A \in \cA \} \setminus \varnothing 
\end{equation*} 
the partition of $U$ \emph{induced} by $\cA$.

We are now ready to state the \emph{recombination equation} \cite[Eq. (7)]{baakebaakesalamat},
\begin{equation}\label{recoeq}
\dot{\omega_t} = \sum_{\cA \in \bP(S)} \varrho(\cA) \big (\cR_\cA (\omega_t) - \omega_t \big ),
\end{equation}
where the $\varrho(\cA)$ are non-negative real numbers, called \emph{recombination rates}.
This equation expresses that in each infinitesimal time interval $[t,t + \dd t]$, for each $\cA \in \bP(S)$, each individual is with probability $\varrho(\cA) \dd t$ replaced by a new $\cA$-recombined offspring, distributed as $\cR_\cA^{}(\omega_t^{})$. In other words, the current type distribution $\omega_t^{}$ is replaced by the convex combination 
\begin{equation}\label{convexcombination}
\Big (1 - \sum_{\cA \in \bP(S)} \varrho(\cA) \dd t \Big ) \omega_t + \sum_{\cA \in \bP(S)} \varrho(\cA) \big (\cR^{}_{\cA} (\omega^{}_t) \big ) \dd t.
\end{equation}

\begin{remark}
For the reader familiar with stochastic models for finite population size, we remark that Eq.~\eqref{recoeq} may alternatively be obtained from the Moran model with recombination via a dynamic law of large numbers. This is because the Moran models with growing population size form a so-called \emph{density dependent family}; see~\cite[Thm. 11.2.1]{eundk}. \hfill $\diamondsuit$ 
\end{remark} 

Eq.~\eqref{convexcombination} motivates the ansatz
\begin{equation}\label{ansatz}            
\omega_t = \sum_{\cA \in \bP(S)} a_t(\cA) \cR^{}_\cA (\omega_0)
\end{equation}
for the solution of Eq.~\eqref{recoeq}.
Now, inserting this ansatz into Eq.~\eqref{recoeq}, leads to the following result.
\begin{theorem}[{\cite[Thm.~1]{baakebaakesalamat}}]\label{reduction}
Every solution $\omega$ of Eq.~\eqref{recoeq} has the form
\begin{equation}\label{ansatz}
\omega_t^{} = \sum_{\cA \in \bP(S)} a_t(\cA) \cR_{\cA}^{}(\omega^{}_0),
\end{equation}
where the coefficients $a_t(\cA)$ satisfy the coupled nonlinear differential equations
\begin{equation*}
\dot{a}_t(\cA)  = -\sum_{\cB} \varrho(\cB) \cdot a_t(\cA)+ \sum_{\udo{\cB} \succcurlyeq \cA} \Bigg ( \prod_{i=1}^{|\cB|}\sum_{\substack{\cC \in \bP(S) \\ \cC|_{B_i} = \cA|_{B_i}}} a_t(\cC) \Bigg) \varrho (\cB),
\end{equation*}
with initial value $a_0(\pmax) = 1$ and $a_0(\cA) = 0$, otherwise. The sums run over all partitions of $U$, where the underdot marks the summation variable.\qed
\end{theorem}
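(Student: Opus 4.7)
The plan is to verify, by direct substitution, that if the family $(a_t(\cA))_{\cA \in \bP(S)}$ satisfies the stated ODE system with the given initial values, then $\omega_t \defeq \sum_{\cA} a_t(\cA) \cR_\cA(\omega_0)$ solves~\eqref{recoeq} with initial datum $\omega_0$. Since the right-hand side of~\eqref{recoeq} is locally Lipschitz in $\omega$ (the recombinators being polynomial in $\omega$), uniqueness of the Cauchy problem then forces every solution to be of this form. The initial condition is immediate: from $a_0(\pmax) = 1$, $a_0(\cA) = 0$ otherwise, and $\cR_{\pmax}(\omega_0) = \omega_0$, the ansatz at $t=0$ reproduces $\omega_0$.

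The computational heart of the proof is the expansion of $\cR_\cB(\omega_t)$ under the ansatz. Since $\cR_\cB(\nu) = \bigotimes_{i=1}^{|\cB|} \pi_{B_i}.\nu$ and the push-forward of a product measure under a coordinate projection again factors as a product of marginals, one first establishes the identity
\begin{equation*}
\pi_{B_i}.\cR_\cC(\omega_0) \,=\, \bigotimes_{j:\, C_j \cap B_i \neq \varnothing} \pi_{C_j \cap B_i}.\omega_0,
\end{equation*}
which depends on $\cC$ only through the induced partition $\cC|_{B_i}$. Inserting the ansatz into $\cR_\cB$, expanding the $|\cB|$-fold tensor product, and observing that
\begin{equation*}
\bigotimes_{i=1}^{|\cB|} \pi_{B_i}.\cR_{\cC^{(i)}}(\omega_0) \,=\, \cR_\cA(\omega_0),
\end{equation*}
where $\cA \in \bP(S)$ is the (unique) partition with $\cA|_{B_i} = \cC^{(i)}|_{B_i}$ for all $i$ (which automatically satisfies $\cA \preccurlyeq \cB$), identifies the coefficient of $\cR_\cA(\omega_0)$ in $\cR_\cB(\omega_t)$ as
\begin{equation*}
\prod_{i=1}^{|\cB|} \sum_{\cC:\, \cC|_{B_i} = \cA|_{B_i}} a_t(\cC),
\end{equation*}
with the convention that this coefficient vanishes unless $\cA \preccurlyeq \cB$.

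Substituting this expansion into~\eqref{recoeq} and differentiating the ansatz term by term, the coefficient of $\cR_\cA(\omega_0)$ on the right-hand side is a loss term $-\big(\sum_\cB \varrho(\cB)\big) a_t(\cA)$, arising from $-\omega_t$, plus a gain term $\sum_{\cB \succcurlyeq \cA} \varrho(\cB) \prod_{i=1}^{|\cB|} \sum_{\cC:\, \cC|_{B_i} = \cA|_{B_i}} a_t(\cC)$ from $\sum_\cB \varrho(\cB) \cR_\cB(\omega_t)$. Equating this, for each $\cA$, to $\dot a_t(\cA)$ recovers precisely the system in the theorem statement. The main obstacle is the combinatorial bookkeeping in the second paragraph: one must carefully verify that the fibre $\{(\cC^{(1)},\ldots,\cC^{(|\cB|)}) : \bigcup_i \cC^{(i)}|_{B_i} = \cA\}$ coincides with the product of the fibres $\{\cC^{(i)} : \cC^{(i)}|_{B_i} = \cA|_{B_i}\}$, and that every $\cA \preccurlyeq \cB$ is reconstructed uniquely from its restrictions $(\cA|_{B_i})_{i=1}^{|\cB|}$. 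Note that one cannot argue by matching coefficients of $\cR_\cA(\omega_0)$ directly, since these measures need not be linearly independent; the argument is therefore one of sufficiency followed by uniqueness, as outlined above.
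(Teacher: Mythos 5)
Your argument is correct and follows exactly the route the paper indicates (the paper states this result as a quotation of \cite[Thm.~1]{baakebaakesalamat} without reproducing a proof, remarking only that it is obtained by inserting the ansatz into Eq.~\eqref{recoeq}): the marginalisation identity for $\pi_{B_i}^{}.\cR_\cC^{}(\omega_0^{})$, the factorisation of the fibre over each $\cA \preccurlyeq \cB$ into the product of the fibres $\{\cC : \cC|_{B_i} = \cA|_{B_i}\}$, and the resulting identification of coefficients are precisely the computation in the original source. Your closing observation that the measures $\cR_\cA^{}(\omega_0^{})$ need not be linearly independent, so that one must argue by sufficiency of the constructed solution followed by uniqueness of the Cauchy problem rather than by direct coefficient matching, is a genuine subtlety and is handled correctly.
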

We may also (compare Remark~\ref{pointsandmeasures}) rewrite this system in vector-notation as follows.
\begin{equation}\label{nonlinearodes}
\dot{a}_t  = -\sum_{\cA} \sum_{\cB} \varrho(\cB) \cdot a_t(\cA) \cA + \sum_{\cA} \sum_{\udo{\cB} \succcurlyeq \cA} \Bigg ( \prod_{i=1}^{|\cB|}\sum_{\substack{\cC \in \bP(S) \\ \cC|_{B_i} = \cA|_{B_i}}} a_t(\cC) \Bigg) \varrho (\cB) \cA,
\end{equation} 
where $a_t \defeq \sum_{\cA \in \bP(S)} a_t(\cA) \cA = \sum_{\cA \in \bP(S)} a_t(\cA) \delta_\cA^{}$.
This system can be solved recursively by lattice-theoretic means; compare~\cite{baakebaakesalamat}. We will consider it in greater detail  in Section~\ref{sec:nonlinearpartitioning} and show that it is the law of mass action of a chemical reaction network (compare Section~\ref{sec:networks}).

While the system~\eqref{nonlinearodes} is finite-dimensional, it is still highly nonlinear. In fact, Eq.~\eqref{recoeq} can also be related to a \emph{linear} system, via an ancestral partitioning process that runs \emph{backward} in time. Here, we only give a brief sketch of the idea; for further background on genealogical methods in the context of recombination, the reader is referred to the excellent review~\cite{baakebaakereview}.

The partitioning process is a Markov chain $\Sigma = (\Sigma_t)_{t \geqslant 0}$ in continuous time with state space $\bP(S)$ and is most easily understood when started in $\Sigma_0 = \pmax$. Assume that we want to sample the type of a single individual (Alice, say) that is alive at time $T$; Now, for $0 \leqslant t \leqslant T$, each block $\sigma$ of $\Sigma_t$ corresponds to an independent ancestor of Alice that lived at time $T - t$ and from whom she inherited the letters at the sites in $\sigma$. At time $T = T - 0$, Alice herself is alive and corresponds to the unique block of $\Sigma_0^{} =\pmax$.

To understand the time-evolution of $\Sigma$, keep in mind that every block in $\Sigma_{t}$ corresponds to one of Alice's ancestors. Recall that, by our interpretation of Eq.~\eqref{recoeq}, every individual alive at time $T - t$ was with probability $\varrho(\cB) \dd t$ a $\cB$-recombined offspring of parents alive at time $T - t - \dd t$. This means for the evolution of the partitioning process that a block $A \in \Sigma_t$ is in the infinitesimal time step from $t$ to $t + \dd t$, with probability $\varrho(\cB) \dd t$, replaced by the collection of blocks of induced partition $\cB_A^{}$, which reflects the partitioning of the genome of the corresponding ancestor of Alice across its own parents.
More formally, $\Sigma$ is a Markov chain in continuous time with rate matrix  $\cQ$, where
\begin{equation}\label{markovgenerator}
Q(\cA, \cB) \defeq \left \{ \begin{array}{ll}
0, & \text{if } \cB \not \preccurlyeq \cA, \\
\varrho^{A}_{\cB^{}_A}, &\text{if } \cB = \big ( \cA \setminus \{A\} \big ) \cup \cB_A^{} \\
-\sum_{\udo{\cC} \neq \cA} Q(\cA,\cC),  &\text{otherwise.}
\end{array} \right . 
\end{equation}
Here, the \emph{marginal recombination rates} are given by
\begin{equation*}
\varrho^{A}_{\cB^{}_A} \defeq \sum_{\substack{ \cC \in \bP(S) \\ \cC|^{}_{A} = \cB_A^{}}} \varrho_\cC^{}.
\end{equation*}
Formalising our verbal discussion above gives the following stochastic representation
\begin{equation}\label{stochasticrepresentation}
\omega_t^{} = \EE \big [ \cR_{\Sigma_t^{}}^{} (\omega_0) \mid \Sigma_0^{} = \pmax \big]
\end{equation}
of the solution $\omega$ of Eq.~\ref{recoeq}.
More generally for arbitrary starting values, one has the \emph{duality relation}
\begin{equation*}
\cR_\cA^{} (\omega_t) =  \EE \big [ \cR_{\Sigma_t^{}}^{} (\omega_0) \mid \Sigma_0^{} = \cA \big].
\end{equation*}
Put differently, Eq.~\eqref{stochasticrepresentation} implies that any solution of Eq.~\eqref{recoeq} is of the form 
\begin{equation*}
\omega_t^{} = \sum_{\cA \in \bP(S)} b_t(\cA) \cR^{}_{\cA} (\omega_0),
\end{equation*}
where the vector $b_t$ solves the initial-value-problem of the \emph{linear} ode
\begin{equation*}
\dot{b}_t = \cQ^\trans b_t.
\end{equation*}
with initial value $\pmax$.

\section{Chemical reaction networks}
\label{sec:networks}
Let us recapitulate a few basic notions in chemical reaction network theory, taylored to our purposes. For an introduction, see~\cite{feinberg}.
 
Let $\cS$ (not to be confused with $S$, the set of sequence sites) be a finite set, the elements of which will be thought of as the \emph{reacting species} in a \emph{chemical reaction network} (CRN), that is, a finite collection of \emph{chemical reactions}, which are represented by symbolic expressions of the form 
\begin{equation}\label{formalreaction}
r_1 + \ldots + r_{m_1} \xrightarrow{\, \kappa \,}  s_1 + \ldots + s_{m_2}.
\end{equation}
Here, the $r_i$ and $s_i$ are reacting species (not necessarily distinct) and $\kappa > 0$ is the \emph{reaction constant}. The left and right-hand sides in Eq.~\eqref{formalreaction} are called the \emph{complexes} of \emph{substrates} and \emph{products}. In our setting, we will always have $m_1 = m_2 = m$, as we will see later.

\begin{remark}
Recall from Remark~\ref{pointsandmeasures} that we formally identified the elements of any finite set with the corresponding point (or Dirac) measures. In this sense, the addition in Eq.~\eqref{formalreaction} can be understood as addition of vectors in the space of signed measures on $\cS$. 
\end{remark}

Of particular interest are \emph{strongly reversible} CRNs. They are usually defined as CRNs in which the forward reaction constant agrees with the backward reaction constant for every reaction. In the present setting, where we think of reactions as unidirectional, it is more convenient to phrase this slightly differently.
\begin{deff}\label{reversibility}
A CRN is called \emph{strongly reversible} if it can be partitioned into pairs, each consisting of a reaction,
\begin{equation*}
r_1 + \ldots + r_m \xrightarrow{\, \kappa \,}  s_1 + \ldots + s_m,
\end{equation*}
together with its backward reaction,
\begin{equation*}
s_1 + \ldots + s_m \xrightarrow{\, \kappa \,} r_1 + \ldots + r_m.
\end{equation*}
Note that the reaction constant is the same for both reactions.\hfill $\diamondsuit$
\end{deff}

Given a CRN, it is natural to inquire about the dynamics of the probability vector
\begin{equation*}
c_t = \sum_{s \in \cS} c_t(s) \delta_s = \sum_{s \in \cS} c_t(s) s
\end{equation*}
of normalised concentrations of species. As the left and right-hand sides in Eq.~\eqref{formalreaction} contain the same number of reacting species, the total mass is preserved and may therefore be normalised to one. 

The \emph{law of mass action} translates the collection of formal expressions~\eqref{formalreaction} into a system of coupled differential equations for $c = (c_t)_{t \geqslant 0}$. It assumes that each reaction occurs with a rate that is proportional to the concentration of each of the substrates, and hence to their product; the proportionality factor is the reaction constant $\kappa$ in Eq.~\eqref{formalreaction}. As each reaction decreases the concentration of substrates and increases the concentration of products, we obtain the following system of ordinary differential equations,
\begin{equation*}
\dot{c_t} = \sum \kappa c_t(r_1) \cdot \ldots \cdot c_t(r_m) \big ( s_1 + \ldots + s_m - r_1 - \ldots - r_m \big ),
\end{equation*}
where, again, the reacting species $s_1,\ldots,s_m$ and $r_1,\ldots,r_m$ are identified with the corresponding point measures $\delta^{}_{s_1},\ldots,\delta^{}_{s_m}$ and $\delta^{}_{r_1},\ldots,\delta^{}_{r_m}$ and the sum is taken over all reactions that make up the CRN. 
We refer the interested reader to~\cite[Ex.~11.1.C]{eundk} for a probabilistic variation on this theme. 

We now return to recombination. In~\cite{muellerhofbauer}, genetic recombination is treated as a CRN with the types as reacting species, in the special case of two parents. For example, recombination according to $\cA = \{\{1,2\},\{3\}\}$ translates to the reaction
\begin{equation*}
(\textcolor{gre}{x_1,x_2,x_3}) + (\textcolor{lil}{y_1,y_2,y_3}) \xrightarrow{\, \frac{\varrho(\cA)}{2} \,} (\textcolor{gre}{x_1,x_2},\textcolor{lil}{y_3}) + (\textcolor{lil}{y_1,y_2},\textcolor{gre}{x_3}).
\end{equation*}
This describes the process of recombination at the molecular level; first, the parental sequences $(\textcolor{gre}{x_1,x_2,x_3})$ and $(\textcolor{lil}{y_1,y_2,y_3})$ are split in two, according to $\cA$. Then, two new sequences are obtained by joining the leading part of one sequence with the trailing part of the other, and vice versa. For each (ordered) pair of types and each partition $\cA$, the reaction constant is $\frac{\varrho(\cA)}{2}$; this is a special case of Theorem~\ref{networkrepresentationgam}, which is stated below.
In the case when there are more than two parents, the basic idea remains the same; for any partition $\cC$, take $|\cC|$ types, split each of them according to $\cC$, rearrange the parts and join them back together. Note that this last step is somewhat ambiguous; already in the three-parent case, this can be done in at least two different ways; either,
\begin{equation}\label{onepossibility}
(\textcolor{gre}{x_1,x_2,x_3}) + (\textcolor{lil}{y_1,y_2,y_3}) + (\textcolor{ora}{z_1,z_2,z_3}) \longrightarrow (\textcolor{gre}{x_1},\textcolor{lil}{y_2},\textcolor{ora}{z_3}) + (\textcolor{lil}{y_1},\textcolor{ora}{z_2},\textcolor{gre}{x_3}) + (\textcolor{ora}{z_1},\textcolor{gre}{x_2},\textcolor{lil}{y_3}),
\end{equation}
or
\begin{equation}\label{anotherpossibility}
(\textcolor{gre}{x_1,x_2,x_3}) + (\textcolor{lil}{y_1,y_2,y_3}) + (\textcolor{ora}{z_1,z_2,z_3}) \longrightarrow (\textcolor{gre}{x_1},\textcolor{ora}{z_2},\textcolor{lil}{y_3}) + (\textcolor{ora}{z_1},\textcolor{lil}{y_2},\textcolor{gre}{x_3}) + (\textcolor{lil}{y_1},\textcolor{gre}{x_2},\textcolor{ora}{z_3}).
\end{equation}
One way to resolve this ambiguity is to order the substrates and define the reaction accordingly. Thus, there may be many different reactions with a common complex of substrates. More precisely, for every $\cC \in \bP(S)$ and each ordered tuple \mbox{$\big(x^{(1)},\ldots,x^{(|\cC|)} \big) \in X^{\cC}$}, we define a chemical reaction via the following graphical construction, illustrated in Figure~\ref{reactionscheme}. 
\begin{figure}[t]
\psfrag{1}{$\pi^{}_{C_1}\big (x^{(1)} \big)$}
\psfrag{2}{$\pi^{}_{C_1}\big (x^{(2)} \big)$}
\psfrag{3}{$\pi^{}_{C_1}\big (x^{(3)} \big)$}
\psfrag{4}{$\pi^{}_{C_1}\big (x^{(1)} \big)$}
\psfrag{5}{$\pi^{}_{C_1}\big (x^{(2)} \big)$}

\psfrag{6}{$\pi^{}_{C_2}\big (x^{(1)} \big)$}
\psfrag{7}{$\pi^{}_{C_2}\big (x^{(2)} \big)$}
\psfrag{8}{$\pi^{}_{C_2}\big (x^{(3)} \big)$}
\psfrag{9}{$\pi^{}_{C_2}\big (x^{(1)} \big)$}
\psfrag{10}{$\pi^{}_{C_2}\big (x^{(2)} \big)$}

\psfrag{11}{$\pi^{}_{C_3}\big (x^{(1)} \big)$}
\psfrag{12}{$\pi^{}_{C_3}\big (x^{(2)} \big)$}
\psfrag{13}{$\pi^{}_{C_3}\big (x^{(3)} \big)$}
\psfrag{14}{$\pi^{}_{C_3}\big (x^{(1)} \big)$}
\psfrag{15}{$\pi^{}_{C_3}\big (x^{(2)} \big)$}
\includegraphics[width = 0.7 \textwidth]{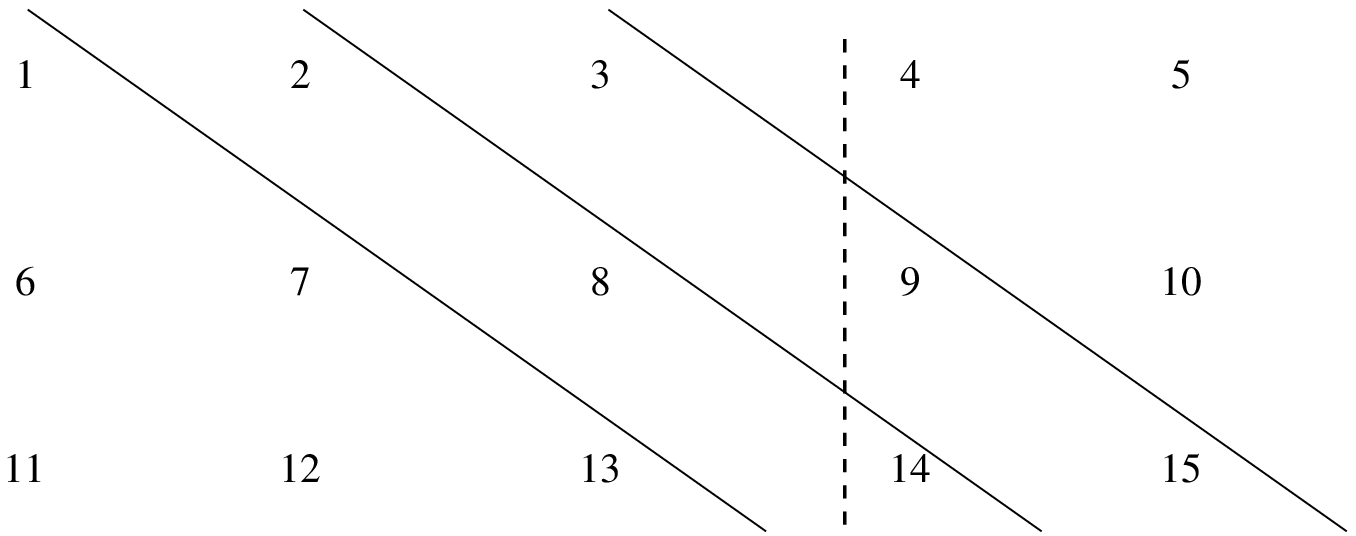}
\caption{\label{reactionscheme}
An illustration of the reaction scheme for $|\cC| = 3$. The types $x^{(1)}$, $x^{(2)}$ and $x^{(3)}$ are each split according to $\cC$ and then joined back together along the connecting lines. For the sake of clarity, the first two columns of the diagram are repeated after the vertical line. 
}
\end{figure}
First, just as in the two-parent case, the $|\cC|$ types are broken up into their subsequences $\pi_{C_j}^{} (x^{(i)})$ over the blocks of $\cC$. Then, they are arranged on a two-dimensional, $|\cC|$-periodic grid (or discrete torus), where $\pi^{}_{C_j} \big ( x^{(i)} \big )$ is placed in the $i$-th column and $j$-th row. Finally, the products are formed by joining the fragments along each diagonal line, running from north-west to south-east through the grid. Alternatively, one may think about moving the $i$-th row $i-1$ places to the left, and then joining the fragments in each column. More formally, every choice of $\cC$ and $\big (x^{(1)},\ldots,x^{(|\cC|)} \big)$ defines a reaction, 
\begin{equation} \label{gametereactions}
\sum_{j = 1}^{|\cC|} x^{(j)} \xrightarrow{\, \frac{\varrho(\cC)}{|\cC|} \,} \sum_{j=1}^{|\cC|} \bigsqcup_{i=1}^{|\cC|} \pi_{C_i}^{} \big (x^{(i+j-1)} \big),
\end{equation}
where the indices are to be read modulo $|\cC|$.

Notice that the right-hand side depends on the order of the substrates, while the left-hand side is independent of it. For instance, in our earlier example with three sites and parents (that is, $\cC$ is $\pmin$, the trivial partition into singletons), the choice $x^{(1)} = x, x^{(2)} = y, x^{(3)} = z$ leads to Eq.~\eqref{onepossibility}, while exchanging the roles of the second and third type leads to Eq.~\eqref{anotherpossibility}.

\begin{theorem}\label{networkrepresentationgam}
For finite X, Eq.~\eqref{recoeq} is the law of mass action for the \textnormal{CRN} comprised of all reactions~\eqref{gametereactions}, one for every choice of $\cC$ and $\big (x^{(1)},\ldots,x^{(|\cC|)} \big )$. More concisely, \eqref{recoeq} is equivalent to
\begin{equation*}
\dot{\omega_t} = \sum_{\cC \in \bP(S)} \sum_{x^{(1)},\ldots,x^{(|\cC|)} \in X} \frac{\varrho(\cC)}{|\cC|} \, \omega_t \big (x^{(1)} \big) \cdot \ldots \cdot \omega_t \big (x^{(|\cC|)} \big) \Bigg ( \sum_{j = 1}^{|\cC|} \bigg ( \bigsqcup_{i = 1}^{|\cC|} \pi_{C_i}^{} \big (x^{(i + j -1)} \big) - x^{(j)} \bigg ) \Bigg ).
\end{equation*}
\end{theorem}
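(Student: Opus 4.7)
My plan is to expand both sides of the claimed equivalence and show they are equal term by term, organised by partition $\cC \in \bP(S)$. The law of mass action applied to the reactions~\eqref{gametereactions} directly yields the displayed ODE on the right, so the real work is in matching it against the recombination equation~\eqref{recoeq}.

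First, I would fix $\cC \in \bP(S)$ and focus on the $\cC$-contribution on the right-hand side. By Lemma~\ref{vectorrecombinator} (and the notational convention from Remark~\ref{pointsandmeasures}), one has
\begin{equation*}
\cR_\cC(\omega_t) = \sum_{x^{(1)},\ldots,x^{(|\cC|)} \in X} \omega_t\big(x^{(1)}\big) \cdots \omega_t\big(x^{(|\cC|)}\big) \, \bigsqcup_{i=1}^{|\cC|} \pi_{C_i}^{}\big(x^{(i)}\big).
\end{equation*}
The key observation is that, for any fixed cyclic shift $j$, the substitution $y^{(i)} := x^{(i+j-1)}$ (indices modulo $|\cC|$) is a bijection of $X^{|\cC|}$ that leaves the product measure $\omega_t^{\otimes |\cC|}$ invariant. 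Consequently,
\begin{equation*}
\sum_{x^{(1)},\ldots,x^{(|\cC|)}} \omega_t\big(x^{(1)}\big) \cdots \omega_t\big(x^{(|\cC|)}\big) \, \bigsqcup_{i=1}^{|\cC|} \pi_{C_i}^{}\big(x^{(i+j-1)}\big) = \cR_\cC(\omega_t)
\end{equation*}
for every $j$, so summing over $j$ produces a factor of $|\cC|$.

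The second key identity I need is that, for every $j$,
\begin{equation*}
\sum_{x^{(1)},\ldots,x^{(|\cC|)}} \omega_t\big(x^{(1)}\big) \cdots \omega_t\big(x^{(|\cC|)}\big) \, x^{(j)} = \omega_t,
\end{equation*}
which follows immediately from $\omega_t$ being a probability measure, since the sums over $x^{(k)}$ with $k \neq j$ each evaluate to $1$. Summing over $j$ again gives a factor of $|\cC|$.

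Combining these two identities, the $\cC$-block of the displayed ODE equals
\begin{equation*}
\frac{\varrho(\cC)}{|\cC|} \Big( |\cC| \cR_\cC(\omega_t) - |\cC| \omega_t \Big) = \varrho(\cC) \big( \cR_\cC(\omega_t) - \omega_t \big),
\end{equation*}
and summing over $\cC \in \bP(S)$ recovers exactly the right-hand side of~\eqref{recoeq}. There is no real obstacle here: the only subtlety is keeping track of the cyclic indexing in~\eqref{gametereactions} and recognising that the bijection $(x^{(1)},\ldots,x^{(|\cC|)}) \mapsto (x^{(j)},x^{(j+1)},\ldots,x^{(j-1)})$ on $X^{|\cC|}$ leaves both the product weights and the overall sum invariant, so the cyclic averaging collapses neatly to a single recombinator.
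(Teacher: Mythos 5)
Your proposal is correct and follows essentially the same route as the paper's proof: both invoke Lemma~\ref{vectorrecombinator} and then use the invariance of the product $\omega_t\big(x^{(1)}\big)\cdots\omega_t\big(x^{(|\cC|)}\big)$ under cyclic relabelling of the summation indices to identify each $j$-term with $\cR_\cC(\omega_t)$, together with normalisation of $\omega_t$ to handle the $x^{(j)}$ terms. The only cosmetic difference is that you phrase the cyclic shift as a measure-preserving bijection on $X^{|\cC|}$, whereas the paper permutes the factors of the product and renames indices; these are the same argument.
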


\begin{proof}
We will show that for all $\cC \in \bP(S)$ and all $\nu \in \cP(X)$, we have
\begin{equation*}
(\cR_\cC(\nu) - \nu) = \frac{1}{|\cC|} \sum_{x^{(1)},\ldots,x^{(|\cC|)}} \nu \big (x^{(1)} \big)\cdot \ldots \cdot \nu \big (x^{(|\cC|)} \big ) \sum_{j=1}^{|\cC|} \bigg ( \bigsqcup_{i = 1}^{|\cC|} \pi_{C_i}^{}\big (x^{(i+j - 1)}_{} \big) - x^{(j)} \bigg ).
\end{equation*}
Recall that, by Lemma~\ref{vectorrecombinator}, we have
\begin{equation*}
\begin{split}
\cR_{\cC} (\nu) &=  \sum_{x^{(1)},\ldots,x^{(|\cC|)}} \nu \big(x^{(1)} \big)\cdot \ldots \cdot \nu \big (x^{(|\cC|)} \big) \bigsqcup_{i = 1}^{|\cC|} \pi_{C_i}^{} \big (x^{(i)} \big) \\[2mm]
&= \frac{1}{|\cC|} \sum_{j=1}^{|\cC|} \sum_{x^{(1)},\ldots,x^{(|\cC|)}} \nu \big (x^{(1)} \big)\cdot \ldots \cdot \nu \big (x^{(|\cC|)} \big) \bigsqcup_{i = 1}^{|\cC|} \pi_{C_i}^{} \big (x^{(i+j-1)} \big) \\[2mm]
&= \frac{1}{|\cC|} \sum_{x^{(1)},\ldots,x^{(|\cC|)}} \nu \big (x^{(1)} \big )\cdot \ldots \cdot \nu \big (x^{(|\cC|)}\big) \sum_{j=1}^{|\cC|} \bigsqcup_{i = 1}^{|\cC|} \pi_{C_i}^{}\big (x^{(i+j-1)} \big).
\end{split}
\end{equation*}
Here, we obtain the second equality by 
replacing the product
$
\nu \big (x^{(1)} \big )\cdot \ldots \cdot \nu(x^{(|\cC|)})
$
with its cyclic permutation
$
\nu \big (x^{(1 - j +1)} \big ) \cdot \ldots \cdot \nu \big (x^{(|\cC| - j +1)} \big),
$
and subsequently renaming the indices; recall that indices are to be read modulo $|\cC|$.
Similarly, keeping in mind that $\sum_{x \in X} \nu(x) = 1$ because $\nu$ is a probability measure, we obtain,
\begin{equation*}
\begin{split}
\nu = \sum_{x \in X} \nu(x) x  =  & \frac{1}{|\cC|} \sum_{j=1}^{|\cC|} \sum_{x \in X} \bigg ( \sum_{y \in X} \nu(y) \bigg )^{j-1}\nu(x) \bigg ( \sum_{y \in X} \nu(y) \bigg )^{|\cC| - j} x \\[2mm]
= & \frac{1}{|\cC|}\sum_{x^{(1)},\ldots,x^{(|\cC|)}}  \nu \big(x^{(1)} \big ) \cdot \ldots \cdot \nu \big (x^{(|\cC|)} \big) \big (x^{(1)} + \ldots + x^{(|\cC|)} \big),
\end{split}
\end{equation*}
which completes the argument. 
\end{proof}

We have thus seen that, in the case of finite type spaces, genetic recombination can be reinterpreted as a CRN, also in the case of an arbitrary number of parents. In fact, this network is strongly reversible.

\begin{theorem}\label{reversibilitytheorem}
The \textnormal{CRN} from Theorem~\textnormal{\ref{networkrepresentationgam}} is strongly reversible in the sense of Definition~\textnormal{\ref{reversibility}}.
\end{theorem}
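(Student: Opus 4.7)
My plan is to construct an explicit involution $\iota$ on the set of reactions of the CRN with the property that $\iota(r)$ is the backward reaction of $r$. Since every reaction parametrised by a given partition $\cC$ shares the rate constant $\varrho(\cC)/|\cC|$, the rate-matching condition in Definition~\ref{reversibility} is then automatic, and the orbits of $\iota$ (each of size~$1$ or~$2$) supply the required partition into pairs; orbits of size~$1$ correspond to reactions whose substrate and product complexes already agree, and so contribute trivially to the law of mass action.

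To identify $\iota$, I would first exploit the crucial identity $\pi^{}_{C_i}(y^{(j)}) = \pi^{}_{C_i}(x^{(i+j-1)})$, which holds because $y^{(j)}$ coincides with $x^{(i+j-1)}$ on the block $C_i$ by the very definition of $\sqcup$. The most naive guess -- feeding the products back in as substrates in the same order -- fails: using the identity above, one verifies inductively that the $j$-th component after $k$ such iterations is $\bigsqcup_i \pi^{}_{C_i}(x^{(ki + j - k)})$, so that the orbit generically has length $|\cC|$ rather than $2$. The remedy is to feed the products back in \emph{reverse} order, i.e.\ to set
\begin{equation*}
\iota\bigl(\cC,\ts(x^{(1)},\ldots,x^{(|\cC|)})\bigr) \defeq \bigl(\cC,\ts(y^{(|\cC|)}, y^{(|\cC|-1)},\ldots,y^{(1)})\bigr).
\end{equation*}

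It remains to verify two things. First, applying the definition~\eqref{gametereactions} to the tuple $z^{(m)} \defeq y^{(|\cC|+1-m)}$ and using the identity $\pi^{}_{C_i}(y^{(k)}) = \pi^{}_{C_i}(x^{(i+k-1)})$ twice, the $j$-th product of $\iota(r)$ collapses cleanly to $\bigsqcup_i \pi^{}_{C_i}(x^{(1-j)}) = x^{(1-j)}$ (indices modulo $|\cC|$). Thus the products of $\iota(r)$ are a permutation of the substrates of $r$, so the substrate and product complexes swap as required by Definition~\ref{reversibility}. Second, applying $\iota$ a second time reverses the order once more and returns $(x^{(1)},\ldots,x^{(|\cC|)})$, giving $\iota^2 = \id$. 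The main obstacle is spotting the correct reordering (the anti-shift $m \mapsto |\cC|+1-m$ rather than a cyclic shift); once that is in place, everything reduces to routine arithmetic modulo~$|\cC|$.
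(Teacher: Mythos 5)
Your proposal is correct and follows essentially the same route as the paper: the involution $\iota$ you construct (feeding the products back as substrates in \emph{reverse} order) is exactly the map $\varphi$ used in the paper's proof, with orbits of size two yielding the forward--backward pairs at the common rate $\varrho(\cC)/|\cC|$ and fixed points yielding void reactions. The only difference is that you verify the involution property and the swapping of complexes by explicit index arithmetic, where the paper appeals to an inspection of its reaction-scheme figure.
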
 
\begin{proof}
Let $\cC$ be fixed. Define $\varphi : X^{|\cC|} \to X^{|\cC|}$ via
\begin{equation*}
\varphi \big (x^{(1)},\ldots,x^{(|\cC|)} \big ) \defeq \bigg (\bigsqcup_{i=1}^{|\cC|} \pi_{C_i}^{} \big (x^{(i+|\cC| - 1)} \big),\ldots,\bigsqcup_{i=1}^{|\cC|} \pi_{C_i}^{} \big (x^{(i)} \big ) \bigg ).
\end{equation*}
Note that $\varphi \big (x^{(1)},\ldots,x^{(|\cC|)} \big )$ contains the products in the reaction defined by $\cC$ together with $\big (x^{(1)},\ldots,x^{(|\cC|)} \big )$, in reverse order (compare~\eqref{gametereactions}).
As short reflection on Fig.~\ref{reactionscheme} reveals that $\varphi$ is an involution and therefore partitions $X^{|\cC|}$ into orbits that contain either one or two elements. Consider first an orbit with two elements $\big (x^{(1)},\ldots,x^{(|\cC|)} \big )$ and $\big (y^{(1)},\ldots,y^{(|\cC|)}\big )$. Then, the associated reactions form a forward-backward reaction pair,
\begin{equation*}
\sum_{j = 1}^{|\cC|} x^{(j)} \xrightarrow{\, \frac{\varrho(\cC)}{|\cC|} \,} \sum_{j = 1}^{|\cC|} y^{(j)} \text{\quad and \quad} \sum_{j = 1}^{|\cC|} y^{(j)} \xrightarrow{\, \frac{\varrho(\cC)}{|\cC|} \,} \sum_{j = 1}^{|\cC|} x^{(j)}.
\end{equation*} 
On the other hand, the reaction defined by a fixed point of $\varphi$ is void, since its product and substrate complex agree.
\end{proof}

Next, we consider the connection to gradient systems.
\section{Gradient systems}
\label{sec:gradients}
For this section, we need a few basic notions from differential (particularly Riemannian) geometry, which we recall here for the convenience of the reader. For further background, we refer the reader to~\cite{walschap}, in particular Chapter~5.
For a real-valued differentiable function $V$, defined on (some subset of) $\RR^d$, and a function $C$ with the same domain and values in the symmetric positive semi-definite matrices, we call the ordinary differential equation 
\begin{equation}\label{defgradientsystems}
\dot{x} = C(x) \nabla V(x)
\end{equation} 
a \emph{generalised gradient system} (with respect to the potential $V$).
Here, 
\begin{equation*}
\nabla \defeq \sum_{i = 1}^d \hat{e}_i \frac{\partial}{\partial x_i}
\end{equation*}
is the nabla symbol and $\{\hat{e}_1,\ldots,\hat{e}_d\}$ denotes the standard basis of $\RR^d$.

Given $x \in \RR^d$, a vector $v$ in $T_x (\RR^d)$, the tangent space of $\RR^d$ at $x$, and a continuously differentiable curve $\gamma$ in $\RR^d$ with $\gamma(0) = x$ and $\gamma'(0) = v$, recall that the \emph{directional derivative} of $V$ in direction $v$ is given by 
\begin{equation*}
\dd V(x)(v) \defeq \frac{\dd}{\dd t} V \big (\gamma(t) \big)|_{t = 0}.
\end{equation*}
The one-form $\dd V$ is called the \emph{exterior derivative} of $V$; note that it can be defined analogously for any real-valued function on a smooth manifold, and, in particular, does not depend on the Euclidean structure of $\RR^d$.
One has, by an application of the chain rule,
\begin{equation}\label{gradientdef}
\dd V(x)(v) = \sum_{j = 1}^d \gamma'(0)_j \frac{\partial}{\partial x_j} V(x) = \langle \gamma'(0), \nabla V(x) \rangle,
\end{equation}
where $\langle \cdot, \cdot \rangle$ denotes the standard scalar product on $\RR^d$. 
Replacing the standard scalar product by a general Riemannian metric $\langle \! \langle \cdot, \cdot \rangle \! \rangle_x$, (that is, a positive definite, symmetric bilinear form on the tangent space, which varies smoothly, depending on the base point),  Eq.~\eqref{gradientdef} can be used to define the \emph{gradient} of $V$ with respect to this metric~\cite[Ex.~108]{walschap}, denoted by $\text{grad}_{\langle \! \langle \cdot, \cdot \rangle \! \rangle} (V)$; it is the unique vectorfield that satisfies
\begin{equation*}
\dd V(x)(v) = \langle \! \langle v, \text{grad}_{\langle \! \langle \cdot, \cdot \rangle \! \rangle} (V)(x) \rangle \! \rangle_x
\end{equation*} 
for all $x$ and $v$.
Geometrically, this means that, unless $x$ is an equilibrium, $\text{grad}_{\langle \! \langle \cdot, \cdot \rangle \! \rangle} (V)(x)$ points in the direction of steepest ascent of $V$ at point $x$, with respect to the chosen metric. 
In particular, if $C(x)$ in Eq.~\eqref{defgradientsystems} is invertible and we consider the metric, 
\begin{equation*}
\langle \! \langle u, w \rangle \! \rangle_x := \langle u, C(x)^{-1} w \rangle,
\end{equation*}
we see that
\begin{equation*}
\text{grad}_{\langle \! \langle \cdot, \cdot \rangle \! \rangle}(V)(x) = C(x) \text{grad}_{\langle \cdot, \cdot \rangle}(V)(x) = C(x) \nabla V(x).
\end{equation*}
Thus, Eq.~\eqref{defgradientsystems} can be thought of as a gradient system in the classical sense, if we replace the Euclidean metric on $\RR^d$ by a Riemannian one, at least in the case that $C(x)$ is invertible.

The interpretation is somewhat more delicate when $C(x)$ fails to be invertible. Intuitively, one might think of the kernel of $C(x)$ as a set of forbidden directions, and try to restrict attention to submanifolds which partition the space and are in each point $x$ tangent to the image of $C$. However, this interpretation is only valid when the image of $C$ is \emph{integrable} in the sense that whenever $Y$ and $Z$ are two vectorfields such that $Y(x) \in \text{Im }C(x)$ and $Z(x) \in \text{Im }C(x)$ for all $x$, then also $[Y,Z](x) \in \text{Im }C(x)$ for all $x$, where $[Y,Z]$ denotes the Lie bracket of $Y$ and $Z$; this is the content of Frobenius' theorem \cite[Thm.~1.9.2]{walschap}.
The situation when $\text{Im } C$ is not integrable can be understood via the theory of {sub-}Riemannian manifolds. Roughly speaking, this theory is concerned with Riemannian metrics which may take the value $+\infty$; see \cite{bellaiche} for an overview.

\begin{remark}
To demonstrate that the condition of integrability is not trivial, consider the following two vectorfields on $\RR^3$.
\begin{equation*}
X_1 \defeq \frac{\partial}{\partial x_1} \text{\quad and \quad} X_2 \defeq x_1 \frac{\partial}{\partial x_3} + \frac{\partial}{\partial x_2}.
\end{equation*}
Note that 
\begin{equation*}
[X_1,X_2] = \frac{\partial}{\partial x_3}
\end{equation*}
is nowhere in the span of $X_1$ and $X_2$; thus, proving integrability in our case (and for the gradient systems arising in chemical reaction network theory in general) might be an interesting question in its own right. \hfill $\diamondsuit$
\end{remark}

We remark that, under the assumption that~\eqref{defgradientsystems} has a unique equilibrium, the potential $V$ is always a strong, (global) Lyapunov function (by which we mean that $V$ is strictly increasing along non-constant solutions). This is because
\begin{equation*}
\langle \nabla V(x), \dot{x} \rangle = \langle \nabla V(x), C(x) \nabla V(x) \rangle \geq 0,
\end{equation*}
by the positive semi-definiteness of $C(x)$. Equality holds if and only if $\nabla V(x)$ is in the kernel of $C(x)$ (implying that $\dot{x} = 0$), hence, if and only if the system is in equilibrium. 

We have seen in the previous section that the general recombination equation, interpreted as a chemical reaction network, is strongly reversible. Thus, it is a gradient system in the sense of Eq.~\eqref{defgradientsystems}, by standard theory; compare~\cite{yong,mielke}, where this is proved in much greater generality. For the sake of completeness, we include the simple proof of this fact, in the special case needed for our purposes.

\begin{theorem}\label{reversiblenetworksaregradientsystems}
The law of mass action for any strongly reversible \textnormal{CRN} can be written as a generalised gradient system,
\begin{equation*}
\dot{c}_t = C(c_t) \nabla F(c_t),
\end{equation*}
where
\begin{equation*}
F(c) \defeq - \sum_{s \in \cS} \Big ( c(s) \log \big (c(s) \big) - c(s) \Big)
\end{equation*}
is called the \emph{negative free energy} and $C$ is a continuous function on $\cP(\cS)$, which is smooth on its interior and takes values in the positive semi-definite matrices.
\end{theorem}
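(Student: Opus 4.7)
The plan is to exploit the forward-backward pairing from Definition~\ref{reversibility} together with the classical \emph{logarithmic mean identity}
\begin{equation*}
x - y \,=\, \Lambda(x,y)(\log x - \log y), \qquad \Lambda(x,y) := \frac{x-y}{\log x - \log y},
\end{equation*}
extended continuously by $\Lambda(x,x) := x$ and $\Lambda(x,0) := 0$. A direct computation of partial derivatives gives $\big(\nabla F(c)\big)_s = -\log c(s)$, so that for any complex $r_1 + \ldots + r_m$, writing $\mathbf{r} := \sum_i r_i \in \RR^{\cS}$ (counted with multiplicity), we have $\log\big(c(r_1)\cdots c(r_m)\big) = -\langle \mathbf{r}, \nabla F(c)\rangle$. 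This is the bridge between the multiplicative mass-action monomials and the additive gradient $\nabla F$.

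Next, I would take a forward-backward pair, i.e.\ $r_1 + \cdots + r_m \xrightarrow{\kappa} s_1 + \cdots + s_m$ together with $s_1 + \cdots + s_m \xrightarrow{\kappa} r_1 + \cdots + r_m$, whose joint contribution to $\dot{c}_t$ is
\begin{equation*}
\kappa \big(c_t(r_1)\cdots c_t(r_m) - c_t(s_1)\cdots c_t(s_m)\big)(\mathbf{s} - \mathbf{r}).
\end{equation*}
Applying the logarithmic mean identity to the scalar difference rewrites this as
\begin{equation*}
\kappa\, \Lambda\big(c_t(r_1)\cdots c_t(r_m),\, c_t(s_1)\cdots c_t(s_m)\big)\,\langle \mathbf{s}-\mathbf{r}, \nabla F(c_t)\rangle\, (\mathbf{s}-\mathbf{r}),
\end{equation*}
which is the same as $\kappa\, \Lambda(\cdots)(\mathbf{s}-\mathbf{r})(\mathbf{s}-\mathbf{r})^\trans \nabla F(c_t)$. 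Summing over all pairs of the CRN then yields $\dot{c}_t = C(c_t)\nabla F(c_t)$ with
\begin{equation*}
C(c) \;:=\; \sum_{\text{pairs}} \kappa\, \Lambda\big(c(r_1)\cdots c(r_m),\, c(s_1)\cdots c(s_m)\big)\,(\mathbf{s}-\mathbf{r})(\mathbf{s}-\mathbf{r})^\trans.
\end{equation*}

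It remains to verify the advertised properties of $C$. Positive semi-definiteness is immediate: each summand is a non-negative scalar times a rank-one symmetric matrix $vv^\trans$, since $\Lambda \geq 0$ (it is a mean). Smoothness on the interior of $\cP(\cS)$ is also direct, as each mass-action monomial is a polynomial in $c$ and $\Lambda$ is smooth on $(0,\infty)^2$. The only mildly subtle, though routine, point I foresee is the continuous extension of $C$ to the boundary of $\cP(\cS)$, where some $c(s)$ may vanish; this follows from the elementary bound $0 \leq \Lambda(x,y) \leq \max(x,y)$, which forces $\Lambda \to 0$ as either argument tends to $0$ and thereby guarantees continuity of $C$ up to the boundary.
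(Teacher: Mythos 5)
Your proposal is correct and follows essentially the same route as the paper's proof: the logarithmic-mean factorisation of each forward--backward pair's contribution into a non-negative scalar times the rank-one projector $(\mathbf{s}-\mathbf{r})(\mathbf{s}-\mathbf{r})^\trans$ acting on $\nabla F$, followed by summing the resulting positive semi-definite matrices. The only nitpick is that your bound $0 \leq \Lambda(x,y) \leq \max(x,y)$ by itself only gives continuity of $\Lambda$ at the origin; at boundary points where exactly one argument vanishes you still need the direct (and equally elementary) observation that the denominator $\log x - \log y$ diverges while the numerator stays bounded.
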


\begin{proof}
Due to strong reversibility (see Definition~\ref{reversibility}), the law of mass action takes the form
\begin{equation*}
\dot{c}_t = \sum \Big (\prod_{i = 1}^m c_t(r_i) - \prod_{i = 1}^m c_t(s_i) \Big ) \sum_{i=1}^m (s_i - r_i),
\end{equation*}
where the outer sum is taken over all forward-backward reaction pairs in the network. Define for $x,y \geqslant 0$,
\begin{equation}\label{Ldefinition}
L(x,y) := \frac{x - y}{\text{log }(x) - \text{log }(y)}.
\end{equation}
It is a straightforward exercise to verify that $L$ defines a continuous, non-negative function on $\RR_{\geqslant 0}^2$, which is smooth on $\RR_{> 0}^2$. Note that
\begin{equation*}
\nabla F(c) = - \sum_{s \in \cS} \log \big (c(s) \big) s.
\end{equation*}
Thus, setting (for each forward-backward reaction pair) 
\begin{equation*}
M(c) \defeq L \Big (\prod_{i = 1}^m c(r_i), \prod_{i = 1}^m c(s_i) \Big ) \bigg (\sum_{i=1}^m (s_i - r_i) \bigg)\bigg (\sum_{i=1}^m (s_i - r_i) \bigg)^{\! T},
\end{equation*}
we see by the multiplication rule for the logarithm, that
\begin{equation*}
\Big (\prod_{i = 1}^m c(r_i) - \prod_{i = 1}^m c(s_i) \Big ) \sum_{i=1}^m (s_i - r_i) = M(c) \nabla F(c).
\end{equation*}
Here, we also used that $s^\trans \nabla F(c)  =  - \log \big ( c(s) \big )$ for all $s \in \cS$. Since a non-negative linear combination of positive semi-definite, symmetric matrices is symmetric and positive semi-definite, the claim follows.
\end{proof}

\begin{remark}
Since the total mass, $\sum_{s \in \cS} c_t(s)$, is preserved in our case, we may replace the negative free energy $F$ in Theorem~\ref{reversiblenetworksaregradientsystems} by the entropy
\begin{equation*}
H(c) \defeq \sum_{s \in \cS} c(s) \log \big (c(s) \big).
\end{equation*} 
For the solution of the recombination equation (Eq.~\eqref{recoeq}) this has the following consequence. It is a well-known fact that, when considering the set of probability measures on a product space which all have the same marginals, the product measure of these marginals is a maximiser for the entropy. As the one-dimensional marginals are preserved under recombination (in absence of mutation or selection), the fact that Eq.~\eqref{recoeq} can be written as a generalised gradient system with respect to $H$ reflects on the fact that the solution approaches linkage equilibrium; compare~\cite[Theorem 3.1]{buerger09}.
\hfill $\diamondsuit$.
\end{remark}

\subsection{Explicit examples}
\label{subsec:explicitexample}
Combining Theorems~\ref{reversiblenetworksaregradientsystems},\ref{networkrepresentationgam} and~\ref{reversibilitytheorem}, for finite $X$, there exists a Function $C$, defined on $\cP(X)$ with values in the symmetric positive semi-definite matrices such that
\begin{equation*}
\dot{\omega}_t^{} = C(\omega_t^{}) \nabla F(\omega_t^{})
\end{equation*}
is equivalent to the recombination equation~\eqref{recoeq}. Our goal is now to write down the function $\nu \mapsto C(\nu)$ for $\nu \in \cP(X)$ explicitly for concrete examples.
The most simple one is the classical case with two parents and two diallelic loci (compare~\cite[Ex.~1]{muellerhofbauer}). Then, we have the reaction
\begin{equation*}
(0,0) + (1,1) \xleftrightarrow{\, \varrho \,} (1,0) + (0,1).
\end{equation*}
Identifying $(0,0)$ with the first, $(0,1)$ with the second, $(1,0)$ with the third and $(1,1)$ with the fourth basis vector in $\RR^4$, the matrix $C(\nu)$, as constructed in the proof of Theorem~\ref{reversiblenetworksaregradientsystems} can be written as
\begin{equation*}
\varrho L \big ( \nu(0,0) \nu(1,1), \nu(1,0) \nu(0,1) \big )
\begin{pmatrix}
1 & -1 & -1 & 1 \\
-1 & 1 & 1 & -1 \\
-1 & 1 & 1 & -1 \\
1 & -1 & -1 & 1
\end{pmatrix},
\end{equation*}
with $L$ defined in Eq.~\eqref{Ldefinition}

Next, we treat the slightly more complicated example of three diallelic loci (but still $2$ parents); compare~\cite[Ex.~2]{muellerhofbauer}. Again, we denote the two alleles by $0$ and $1$. We denote the type $(i_1,i_2,i_3)$ by $g^{}_{4i_i + 2 i_2 + i_3}$; in other words, the index of a type is just the type itself, read as a binary integer. For example, we refer to $(0,0,0)$ by $g_0$ and to $(1,0,1)$ by $g_5$, and identify $g_i$ with the canonical $i+1$-th basis vector of $\RR^8$.

 Now, by the proof of Theorem~\ref{reversiblenetworksaregradientsystems}, we associate to each reaction pair of the form
\begin{equation} \label{somepairofreactions}
g_{i_1}^{} + g_{i_2}^{} \xleftrightarrow{\, \kappa \,} g_{j_1}^{} + g_{j_2}^{},
\end{equation}
an $8 \times 8$ matrix $M(\nu)$ with entries
\begin{equation*}
M_{ij}(\nu) \defeq \begin{cases}
\kappa L \big (\nu(g_{i_1}^{}) \nu(g_{i_2}^{}), \nu(g_{j_1}^{}) \nu(g_{j_2}^{}) \big ), &\text{ if } g_{i-1} \text{ and } g_{j-1} \text{ are on the same side of \eqref{somepairofreactions}, } \\
-\kappa L \big (\nu(g_{i_1}^{}) \nu(g_{i_2}^{}), \nu(g_{j_1}^{}) \nu(g_{j_2}^{}) \big ), &\text{ if } g_{i-1} \text{ and } g_{j-1} \text{ are on different sides of \eqref{somepairofreactions}, } \\
0, & \text{otherwise}
\end{cases}
\end{equation*}
and $C(\nu)$ is then given by summing these matrices over all forward-backward reaction pairs in the network.
To keep things tidy, instead of summing over all forward-backward reaction pairs, we write down the sums over each different linkage class seperately; this allows to take advantage of the following symmetry implied by our choice of indices. Namely, as $1$s are only exchanged between gametes but their relative positions in the sequence remains unchanged, the sum of indices is the same for each complex that are in the same linkage class, of which there are seven; six consisting of only one forward-backward reaction pair  each, and one consisting of six such pairs.
Assume that $M$ belongs to a reaction within a complex where the indices sum to $\ell$. Then, it is easy to see that we have $M_{i,j} = M_{\ell - i + 2,j} = M_{i, \ell-j + 2} = M_{\ell - i + 2,\ell -j + 2}$. This means that, for $\ell$ odd, $M$ is of the form 
\begin{equation*}
\left (
\begin{array}{c|c|c}
A &  \baro \!  A & 0 \\ \hline
\minuso \! A & \minuso \! \baro \! A & 0 \\ \hline 
0 & 0 & 0
\end{array}
\right ) \text{ if } \ell \leqslant 7 \text{ and }
\left (
\begin{array}{c|c|c}
0 & 0 & 0 \\ \hline
0 & \minuso \! \baro \! A &  \minuso \!  A \\ \hline
0 &\baro \! A & A 
\end{array}
\right ) \text{ for } \ell > 7,
\end{equation*} 
where $\baro$ denotes the reversal of columns and $\minuso$ denotes the reversal of rows within a matrix and $A$ is a
$\frac{\ell + 1}{2} \times \frac{\ell + 1}{2}$ matrix if $\ell \leqslant 7$ and a $\frac{14 - \ell +1}{2}$ matrix if $\ell > 7$. For $\ell$ even , M is of the form
\begin{equation*}
\left (
\begin{array}{c|c|c|c}
A & 0 & \baro \!  A & 0 \\ \hline
0 & 0 & 0 &0 \\ \hline
\minuso \! A & 0 & \minuso \! \baro \! A & 0 \\ \hline 
0 & 0 & 0 & 0
\end{array}
\right )\text{ if } \ell \leqslant 7 \text{ and } 
\left (
\begin{array}{c|c|c|c}
0 & 0 & 0 & 0 \\ \hline
0 & \minuso \! \baro \! A & 0 & \minuso \! A \\ \hline
0 & 0 & 0 & 0 \\ \hline
0 & \baro \! A & 0 &  A
\end{array}
\right )  \text{ for } \ell > 7,
\end{equation*}
where $A$ is now an $\frac{\ell}{2} \times \frac{\ell}{2}$ matrix if $\ell \leqslant 7$ and $\frac{14 - \ell}{2}$ if $\ell > 7$; Here, the extra $0$ between the reflected copies of $A$ comes from the fact that reactions of the form 
\begin{equation*}
g_{i}^{} + g_{i}^{} \xleftrightarrow{\, \kappa \,} g_{i}^{} + g_{i}^{}
\end{equation*}
do not contribute to the system. Let us now write these matrices $A$ for the different linkage classes. We  abbreviate the function $\nu \mapsto L \big (\nu(g_{i_1}^{})  \nu(g_{i_2}^{}), \nu(g_{j_1}^{}) ) \nu(g_{j_2}^{}) \big )$ by $L_{i_1i_2,j_1j_2}$. For all $1 \leqslant i \leqslant 3$, $\varrho_i^{}$ denotes the recombination rate for the partition $\{\{i\}, \{1,2,3\} \setminus \{i\} \}$. For the first six linkage classes in~\cite[Ex.~2]{muellerhofbauer}, each consisting of one reaction, we have in place of $A$
\begin{equation*}
\begin{split}
&\left (\begin{smallmatrix} 
(\varrho_1^{} + \varrho_2^{} )L_{06,24} & 0 & -(\varrho_1^{} + \varrho_2^{} )L_{06,24} \\
0 & 0 & 0 \\
-(\varrho_1^{} + \varrho_2^{} )L{06,24} & 0 & (\varrho_1^{} + \varrho_2^{} )L_{06,24}
\end{smallmatrix} \right ),
\left (\begin{smallmatrix} 
(\varrho_1^{} + \varrho_2^{} )L_{17,35} & 0 & -(\varrho_1^{} + \varrho_2^{} )L_{17,35} \\
0 & 0 & 0 \\
-(\varrho_1^{} + \varrho_2^{} )L_{17,35} & 0 & (\varrho_1^{} + \varrho_2^{} )L_{17,35}
\end{smallmatrix} \right ), \\ 
&\left (\begin{smallmatrix} 
(\varrho_1^{} + \varrho_3^{} )L_{05,14} & -(\varrho_1^{} + \varrho_3^{} )L_{05,14} & 0 \\ 
-(\varrho_1^{} + \varrho_3^{} )L_{05,14} & (\varrho_1^{} + \varrho_3^{} )L_{05,14} & 0 \\
0 & 0 & 0
\end{smallmatrix} \right ),
\left ( \begin{smallmatrix} 
0 & 0 & 0 \\
0 & (\varrho_1^{} + \varrho_3^{} )L_{27,36}  & - (\varrho_1^{} + \varrho_3^{} )L_{27,36} \\
0 & - (\varrho_1^{} + \varrho_3^{} )L_{27,36} & (\varrho_1^{} + \varrho_3^{} )L_{27,36}
\end{smallmatrix} \right ),
\end{split}
\end{equation*}
representing the reactions $g_{0}^{} + g_{6}^{} \xleftrightarrow{\, \varrho_1^{} + \varrho_2^{} \,} g_{4}^{} + g_{2}^{}, \,
g_{1}^{} + g_{7}^{} \xleftrightarrow{\, \varrho_1^{} + \varrho_2^{} \,} g_{5}^{} + g_{3}^{}, \,
g_{0}^{} + g_{5}^{} \xleftrightarrow{\, \varrho_1^{} + \varrho_3^{} \,} g_{4}^{} + g_{1}^{}, \,
g_{2}^{} + g_{7}^{} \xleftrightarrow{\, \varrho_1^{} + \varrho_3^{} \,} g_{6}^{} + g_{3}^{}$
and
\begin{equation*} 
\left ( \begin{smallmatrix} 
(\varrho_2^{} + \varrho_3^{} )L_{03,12} & -(\varrho_2^{} + \varrho_3^{} )L_{03,12} \\
-(\varrho_2^{} + \varrho_3^{} )L_{03,12} & -(\varrho_2^{} + \varrho_3^{} )L_{03,12}
\end{smallmatrix} \right),
\left ( \begin{smallmatrix} 
(\varrho_2^{} + \varrho_3^{} )L_{47,56} & -(\varrho_2^{} + \varrho_3^{} )L_{47,56} \\
- (\varrho_2^{} + \varrho_3^{} )L_{47,56} & (\varrho_2^{} + \varrho_3^{} )L_{47,56}
\end{smallmatrix} \right ),
\end{equation*}
representing the reactions $g_{0}^{} + g_{3}^{} \xleftrightarrow{\, \varrho_2^{} + \varrho_3^{} \,} g_{2}^{} + g_{1}^{}$ and 
$g_{4}^{} + g_{7}^{} \xleftrightarrow{\, \varrho_2^{} + \varrho_3^{} \,} g_{5}^{} + g_{6}^{}$. Finally, the last linkage class, comprised of the six reactions $g_2^{} + g_5^{} \xleftrightarrow{\, \varrho_1^{} \,} g_{6}^{} + g_{1}^{}$, $g_6^{} + g_1^{} \xleftrightarrow{\, \varrho_2^{} \,} g_4^{} + g_3^{}$, $g_{4}^{} + g_{3}^{} \xleftrightarrow{\, \varrho_1^{} \,} g_0^{} + g_7^{}$, $g_0^{} + g_7^{} \xleftrightarrow{\, \varrho_2^{} \,} g_2^{} + g_5^{}$, $g_2^{} + g_5^{} \xleftrightarrow{\, \varrho_3^{} \,} g_4^{} + g_3^{}$, $g_6^{} + g_1^{} \xleftrightarrow{\, \varrho_3^{} \,} g_0^{} + g_7^{}$, is represented by 
\begin{equation*}
\left ( \begin{smallmatrix}
\varrho_1^{} L_{07,34}  + \varrho_2^{} L_{07,25} + \varrho_3^{} L_{16,07} & - \varrho_3^{} L_{16,07} & -\varrho_2^{} L_{07,25} & -\varrho_1^{} L_{07,34} \\
-\varrho_3^{} L_{16,07} &  \varrho_1^{} L_{16,25} + \varrho_2^{} L_{16,34} + \varrho_3^{} L_{16,07} & -\varrho_1^{} L_{16,25} & -\varrho_2^{} L_{16,34} \\
-\varrho_2^{} L_{25,07} & -\varrho_1^{} L_{25,16} & \varrho_1^{} L_{25,16} + \varrho_2^{} L_{25,07} + \varrho_3^{} L_{25,34} & - \varrho_3^{} L_{25,34} \\
-\varrho_1^{} L_{34,07} & -\varrho_2^{} L_{34,16} & -\varrho_3^{} L_{34,25} & \varrho_1^{} L_{34,07} + \varrho_2^{} L_{34,16} +  \varrho_3^{} L_{34,25} 
\end{smallmatrix} \right ).
\end{equation*}

\section{monotone Markov chains and the partitioning process}
\label{sec:markov} 
We have seen how the result of M\"uller and Hofbauer~\cite{muellerhofbauer} generalises in the setting of an arbitrary number of parents, at least for finite type spaces. For more general, potentially uncountable type spaces, this approach fails because it is not clear how to even make sense of the notion of the concentration of individual types, unless $\omega_t$ is pure point. Now, we show how the evolution of the law of the partitioning process, related to $\omega$ via Eq.~\ref{stochasticrepresentation} can be written as a gradient system. Ultimately, this is due to the monotonicity of its sample paths; recall from~\eqref{markovgenerator} that the transition rate from $\cA$ to $\cB$ vanishes whenever $\cB \not \preccurlyeq \cA$. In particular, the number of blocks increases strictly in each transition. 
\begin{deff}\label{MCsmo}
Let $\cX = \big (\cX_t \big)_{t \geqslant 0}$ be a continuous-time Markov chain on a finite state space $E$ with rate matrix $\big( Q(i,j) \big )_{i,j \in E}$; it is called a \emph{Markov chain with strictly monotone orbits} (MCsmo)(with respect to a real-valued function $W$ on $E$) if $Q(i,j) > 0$ implies that $W(j) > W(i)$. \hfill $\diamondsuit$
\end{deff}

Recall that the distribution of a finite-state Markov chain $\cX = (\cX_t)_{t \geqslant 0}$ can be interpreted as a probability vector,
\begin{equation*}
p^\cX_t \defeq \sum_{i \in E} p^\cX_t(i) i, 
\end{equation*} 
which evolves in time according to the differential equation,
\begin{equation}\label{distributionode}
\dot{p}_t^\cX = \sum_{i \in E} \sum_{j \in E} p_t^\cX(i) Q(i,j) (j - i).
\end{equation}

If $\cX$ has strictly monotone orbits in the sense of Definition~\ref{MCsmo}, Eq.~\eqref{distributionode} can be written as a generalised gradient system, as defined in Section~\ref{sec:gradients}.

\begin{theorem}
Let $\cX = \big (\cX_t \big)_{t \geqslant 0}$ be a \textnormal{MCsmo} with respect to $W$ and define $\Psi : \RR^E \to \RR$,
\begin{equation*}
\Psi(p) \defeq \sum_{i \in E} p(i) W(i).
\end{equation*}
Then, Eq.~\eqref{distributionode}, which describes the time evolution of $p_t^\cX$, is equivalent to
\begin{equation*}
\dot{p}_t^\cX = K(p_t^\cX) \nabla \Psi(p_t^\cX),
\end{equation*}
where $K$ takes values in the symmetric, positive semi-definite matrices, is continuous on $\cP(E)$ and smooth on its interior.
\end{theorem}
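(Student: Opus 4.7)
The plan is to exploit the fact that $\Psi$ is a linear function of $p$, so its gradient $\nabla \Psi$ is the constant vector $\sum_{i \in E} W(i) \ts i \in \RR^E$. First, I would compute, for any two states $i, j \in E$, the scalar product
\begin{equation*}
(j - i)^\trans \nabla \Psi \, = \, W(j) - W(i),
\end{equation*}
and observe that by the \textnormal{MCsmo} assumption (Definition~\ref{MCsmo}), this quantity is strictly positive whenever $Q(i,j) > 0$. This is the only place where the monotonicity enters, and it is precisely what allows us to divide by $W(j) - W(i)$ without causing any singularities.

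Next, I would rewrite each summand in Eq.~\eqref{distributionode}, for a pair $(i,j)$ with $Q(i,j) > 0$, as
\begin{equation*}
p_t^\cX(i) \ts Q(i,j) \ts (j - i) \, = \, \frac{p_t^\cX(i) \ts Q(i,j)}{W(j) - W(i)} \ts (j - i)(j - i)^\trans \nabla \Psi(p_t^\cX),
\end{equation*}
which is a direct verification using $(j-i)^\trans \nabla \Psi = W(j) - W(i)$. Summing over all ordered pairs $(i, j)$ with $Q(i,j) > 0$ then suggests the definition
\begin{equation*}
K(p) \, \defeq \, \sum_{\substack{i, j \in E \\ Q(i,j) > 0}} \frac{p(i) \ts Q(i,j)}{W(j) - W(i)} \ts (j - i)(j - i)^\trans,
\end{equation*}
so that $\dot{p}_t^\cX = K(p_t^\cX) \nabla \Psi(p_t^\cX)$ by construction.

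Finally, I would verify the regularity and positivity properties of $K$. Each matrix $(j - i)(j - i)^\trans$ is symmetric and positive semi-definite of rank one; on the probability simplex $\cP(E)$, the coefficients $p(i) Q(i,j)/(W(j) - W(i))$ are non-negative (and, as functions of $p$, even polynomial, hence smooth everywhere, not just on the interior), so $K$ is a non-negative linear combination of symmetric PSD matrices, hence itself symmetric and PSD, and depends smoothly on $p$. The main conceptual step is really the first one — recognising that linearity of $\Psi$ makes $\nabla \Psi$ constant and that strict monotonicity gives uniform strict positivity of $(j-i)^\trans \nabla \Psi$ along every admissible transition; once that is noticed, there is no genuine obstacle, and the argument amounts to the same rank-one decomposition trick that underlies the proof of Theorem~\ref{reversiblenetworksaregradientsystems}, but simpler because no logarithmic mean $L$ is needed.
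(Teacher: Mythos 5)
Your proposal is correct and follows essentially the same route as the paper: the same matrix $K(p) = \sum_{Q(i,j)>0} \frac{p(i)Q(i,j)}{W(j)-W(i)}(j-i)(j-i)^\trans$, the same observation that linearity of $\Psi$ makes $\nabla\Psi$ constant with $(j-i)^\trans\nabla\Psi = W(j)-W(i) > 0$ by the MCsmo property. Your explicit verification of symmetry, positive semi-definiteness and smoothness of $K$ is a welcome addition that the paper's proof leaves implicit.
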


\begin{proof}
Define
\begin{equation*}
K(p) \defeq \sum_{\substack{i,j \in E \\ Q(i,j) > 0}} \frac{p(i) Q(i,j)}{W(j) - W(i)} (j-i)(j-i)^\trans.
\end{equation*}
Since $\Psi$ is linear with (constant) gradient
\begin{equation*}
\nabla \Psi = \sum_{i \in E} W(i) i,
\end{equation*}
we have $(j - i)^\trans \nabla \Psi = W(j) - W(i)$ and thus,
\begin{equation*}
K(p) \nabla \Psi =  \sum_{\substack{i,j \in E \\ Q(i,j) > 0}} \frac{p(i) Q(i,j)}{W(j) - W(i)} \big (W(j) - W(i) \big) (j-i) = \sum_{i,j \in E} p(i) Q(i,j) (j - i).
\end{equation*}
Inserting $p_t^\cX$ for $p$, this is exactly the right-hand side of Eq.~\eqref{distributionode}
\end{proof}

The partitioning process mentioned in Section~\ref{sec:recoeq} is a process of succesive refinement; in every non-silent transition, the number of blocks increases at least by one. Thus, it is a MCsmo with respect to the number of blocks.
\begin{coro}\label{PPasMCsmo}
The law $p^\Sigma_{}$ of the partitioning process with generator $\cQ$ given in Eq.~\eqref{markovgenerator} satisfies a generalised gradient system with respect to $N$ given by
\begin{equation*}
N(p) = \sum_{\cA \in \bP(S)} p(\cA) |\cA|. 
\end{equation*}
\end{coro}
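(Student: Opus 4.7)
The plan is to apply the preceding theorem (on Markov chains with strictly monotone orbits) directly, taking $E = \bP(S)$, rate matrix $Q = \cQ$ as defined in Eq.~\eqref{markovgenerator}, and the monotonicity function $W(\cA) \defeq |\cA|$. Under this choice the potential $\Psi$ produced by the theorem is precisely $\Psi(p) = \sum_{\cA} p(\cA) |\cA| = N(p)$, so the gradient representation of Corollary~\ref{PPasMCsmo} follows at once from the conclusion of that theorem.

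The only nontrivial task is to verify that the partitioning process is indeed a MCsmo with respect to $W$ in the sense of Definition~\ref{MCsmo}. First I would read off from Eq.~\eqref{markovgenerator} that for $\cB \neq \cA$, $Q(\cA, \cB) > 0$ forces the structural identity $\cB = (\cA \setminus \{A\}) \cup \cB_A$ for some block $A \in \cA$ and some partition $\cB_A$ of $A$. Since $\cB \neq \cA$, the induced partition $\cB_A$ of $A$ cannot be the trivial one-block partition $\{A\}$; hence $|\cB_A| \geq 2$. Counting blocks gives
\[
|\cB| = |\cA| - 1 + |\cB_A| \geq |\cA| + 1 > |\cA|,
\]
so $W(\cB) > W(\cA)$, as required by Definition~\ref{MCsmo}.

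With this verification in hand, invoking the theorem yields the explicit representation
\[
\dot{p}^\Sigma_t = K(p^\Sigma_t) \nabla N(p^\Sigma_t), \qquad K(p) = \sum_{\substack{\cA, \cB \in \bP(S) \\ Q(\cA, \cB) > 0}} \frac{p(\cA)\, Q(\cA, \cB)}{|\cB| - |\cA|} (\cB - \cA)(\cB - \cA)^\trans,
\]
with $K$ symmetric positive semi-definite, continuous on $\cP(\bP(S))$ and smooth on its interior. There is essentially no obstacle here: the entire content of the corollary is repackaging the structural fact that each non-silent transition of the partitioning process strictly increases the block count, which was already built into the construction of $\cQ$ in Section~\ref{sec:recoeq}.
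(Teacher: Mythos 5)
Your proposal is correct and follows exactly the paper's route: the paper likewise observes (in the sentence preceding the corollary) that every non-silent transition of the partitioning process strictly increases the number of blocks, so that it is a MCsmo with respect to $\cA \mapsto |\cA|$, and then invokes the preceding theorem with $E = \bP(S)$ and $W(\cA) = |\cA|$. Your explicit block count $|\cB| = |\cA| - 1 + |\cB_A^{}| \geqslant |\cA| + 1$ merely fills in the detail the paper leaves implicit.
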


We conclude with an explicit example. 
\begin{example}\label{jordan}
Let us consider a Markov chain with $4$ states $A,B,C,D$ and jump rates $q(A,B) = q(A,C) = 1$ and $q(B,D) = q(C,D) = 2$. All other transition rates are $0$. This is a  Markov chain with strictly monotone orbits in the sense of Definition \ref{MCsmo}, with respect to $W$ given by $W(A) = 1, W(B) = W(C) = 2, W(D) = 3$.  Upon identifying $A,B,C,D$ with the standard basis of $\RR^4$, the linear differential equation describing the dynamics of its distribution reads
\begin{equation}\label{original}
\setlength{\arraycolsep}{0.8mm}
\def\arraystretch{0.7}
\dot{p}_t = \begin{pmatrix}
-2 & 0 & 0 & 0 \\
1 & -2 & 0 & 0 \\
1 & 0 & -2 & 0 \\
0 & 2 & 2 & 0
\end{pmatrix} p_t,
\end{equation}
and can be rewritten as
\begin{equation}\label{rewritten}
\setlength{\arraycolsep}{0.8mm}
\def\arraystretch{0.7}
\dot{p}_t = \begin{pmatrix}
2p_t(A) & - p_t(A) & -p_t(A) & 0 \\
-p_t(A) & p_t(A) + 2p_t(B) & 0 & -2p_t(B) \\
-p_t(A) & 0 & 2p_t(C) + p_t(A) & -2p_t(C) \\
0 & -2p_t(B) & -2p_t(C) & 2p_t(C) + 2 p_t(B)
\end{pmatrix}  \begin{pmatrix}
1 \\ 2 \\ 2 \\ 3
\end{pmatrix}.
\end{equation}
Here, the vector $(1,2,2,3)^\trans$ is the gradient (with respect to the euclidean metric) of 
\begin{equation*}
\Psi(p) = p(A) + 2p(B) + 2 p(C) + 3 p(D).
\end{equation*}
Also, the matrix is symmetric and it is positive semi-definite, as it can be written as a sum of positive semi-definite matrices (as long as $p(A),p(B),p(C) \geq 0)$,
\begin{equation*}
p(A) \left (\begin{smallmatrix} -1 \\ 1 \\ 0 \\ 0 \end{smallmatrix} \right ) \left ( \begin{smallmatrix} -1 & 1 & 0 & 0 \end{smallmatrix} \right ) + p(A) \left (\begin{smallmatrix} -1\\0\\1\\0\end{smallmatrix} \right ) \left ( \begin{smallmatrix} -1 & 0 & 1 & 0 \end{smallmatrix} \right ) + 2p(B) \left ( \begin{smallmatrix} 0\\-1\\0\\1 \end{smallmatrix} \right ) \left ( \begin{smallmatrix} 0 & -1 & 0 & 1 \end{smallmatrix} \right ) + 2p(C) \left( \begin{smallmatrix} 0\\0\\-1\\1 \end{smallmatrix} \right) \left ( \begin{smallmatrix} 0 & 0 & -1 & 1 \end{smallmatrix} \right ),
\end{equation*}
evaluated at $p = p_t$. Thus, Eq.~\eqref{rewritten} is a generalised gradient system in the sense of Eq.~\eqref{defgradientsystems}. 
\end{example}
Note that the coefficient matrix in Eq.~\eqref{original} is not diagonalisable; this is because the eigenvalue $-2$ has algebraic multiplicity $3$, but the associated eigenspace is merely two-dimensional and spanned by $(0,1,-1,0)^\trans$ and $(0,1,0,-1)^\trans$. Its general solution will therefore contain terms of the form $t \ee^{-2 t}$. This seems to contradict the fact that a linear generalised gradient system can not have resonant solutions of the form $t^k \ee^{\lambda t}$ for $k \geq 1$. One has to keep in mind, however, that the gradient representation only holds on the nonnegative cone (which is forward-invariant for the system). Note also that the problematic generalised eigenspace only has a trivial intersection with $\RR_{\geqslant 0}^4$. 
We conclude with one additional example.
\begin{example}
Let us now consider the actual partitioning process, for three loci. We have the five partitions $\cA_1 =\{\{1,2,3\}\}, \cA_2 = \{\{1\},\{2,3\}\}, \cA_3 = \{\{1,3\},\{2\}\}, \cA_4 = \{\{1,2\},\{3\}\}$ and $\cA_5 = \{\{1\},\{2\},\{3\}\}$. Identifying $\cA_i$ with the $i$-th basis vector in $\RR^5$, the generator $\cQ$ of the partitioning process (cf. Eq.~\eqref{markovgenerator}) reads
\begin{equation*}
\left (\begin{smallmatrix}
-\varrho_1^{} - \varrho_2^{} - \varrho_3^{} & \varrho_1^{} & \varrho_2^{} & \varrho_3^{} & 0 \\
0 & - \varrho_2^{} - \varrho_3^{} & 0 & 0 & \varrho_2^{} + \varrho_3^{} \\
0 & 0 & - \varrho_1^{} - \varrho_3^{} & 0 & \varrho_1^{} + \varrho_3^{} \\
0 & 0 & 0 & -\varrho_1^{} - \varrho_2^{} & \varrho_1^{} + \varrho_2^{} \\
0 & 0 & 0 & 0 & 0
\end{smallmatrix}\right ),
\end{equation*}
where $\varrho_1^{}, \varrho_2^{}, \varrho_3^{}$ are as in Subsection~\ref{subsec:explicitexample},
and he gradient system then for the distribution $p_t^{\Sigma}$ then reads
\begin{equation*}
\dot{p}_t^\Sigma = 
\left ( \begin{smallmatrix}
D_1 & -p_t^{}(\cA_1) \varrho_1^{} & -p_t^{} (\cA_1) \varrho_2^{} &  -p_t^{} (\cA_1) \varrho_3^{} & 0 \\
 -p_t^{}(\cA_1) \varrho_1^{} & D_2 & 0 & 0 & -p_t^{} (\cA_2) (\varrho_2^{} + \varrho_3^{}) \\
-p_t^{}(\cA_1) \varrho_2^{} & 0 & D_3 & 0 & - p_t^{} (\cA_3) (\varrho_1^{} + \varrho_3^{}) \\
-p_t^{} (\cA_1) \varrho_3^{} & 0 & 0 & D_4 & -p_t^{}(\cA_4)(\varrho_1^{} + \varrho_2^{}) \\
0 & -p_t^{}(\cA_2)(\varrho_2^{} + \varrho_3^{}) & - p_t^{}(\cA_3) (\varrho_1^{} + \varrho_3^{}) & - p_t^{}(\cA_4) (\varrho_1^{} + \varrho_2^{}) & D_5
\end{smallmatrix} \right ) 
\left ( \begin{smallmatrix}
1 \\ 2 \\ 2 \\ 2 \\ 3
\end{smallmatrix} \right ),
\end{equation*}
where $D_1,\ldots,D_5$ are chosen such that the rows sum to $0$ and $(1,2,2,3)^\trans$ is the gradient $\nabla N$ of the mean number of blocks $N$, defined in Corollary~\ref{PPasMCsmo}. Again, the maximum of the potential, the partition $\{\{1\},\{2\},\{3\}\}$ characterises linkage equilibrium (`all sites come from independent ancestors').
\end{example}

\section{Nonlinear partitioning as a chemical reaction network}
\label{sec:nonlinearpartitioning}
We have seen in the previous chapter that the evolution of the law of the partitioning process can be rewritten as a linear generalised gradient system.  We now consider the nonlinear system from Theorem~\ref{reduction}. We will see that it, too, can be interpreted as the law of mass action for a network of chemical reactions between the partitions of $S$. Its construction is very similar to the network from Section~\ref{sec:networks}. 

To motivate this result, imagine that at time $t = 0$, we paint every gamete in a different color. As described in Theorem~\ref{gametereactions} and Fig.~\ref{reactionscheme}, for every $\cC \in \bP(S)$, every randomly chosen $|\cC|$-tuple of gametes undergoes a chemical reaction as in Eq.~\eqref{gametereactions}
at rate $\frac{\varrho(\cC)}{|\cC|}$. But now, instead of investigating the effect on the type distribution, we ask how the initially assigned colors are mixed in the process. To this end, we attach to each individual a partition of its sites by grouping together all sites with the same color.

Now, consider the $j$-th gamete that results from such a reaction (compare Eq.~\eqref{gametereactions}); for two sites $k$ and $\ell$ in this individual to have the same color, they must come from the same individual on the left-hand side (this is due to the fact that the tuple was chosen randomly and, as there are infinitely many colors in the population, the probability that the same color occurs in more than one individual in the chosen sample is negligible). More formally, there must be an $i$ between $1$ and $|\cC|$ such that $k$ and $\ell$ are both in $C_i$. If that is true, both sites come from the $i+j-1$-th individual, and thus must share the same block of $\cA_{i+j-1}$. Put more concisely, this means that $k$ and $\ell$ belong to the same block of the induced partition $\cA_{i+j-1}^{}|_{C_i}$ for some $i \in \{1,\ldots,|\cC|\}$. Equivalently, this means that the partition that describes the coloring of the $j$-th product gamete is given precisely by
\begin{equation*}
\bigcup_{i = 1}^{|\cC|} \cA_{i + j - 1}^{}|_{C_i}.
\end{equation*}
For an illustration, see Fig.~\ref{reactionwithsplit}.
Thus, the reaction network from Section~\ref{sec:networks} translates to the system consisting of the reactions
\begin{equation} \label{coeffreactions}
\sum_{j=1}^{|\cC|} \cA_j \xrightarrow{\, \frac{\varrho(\cC)}{|\cC|} \,} \sum_{j=1}^{|\cC|} \bigcup_{i=1}^{|\cC|} \cA_{i+j-1}|_{C_i},
\end{equation}
one for each $\cC$ and every $|\cC|$-tuple of partitions of $S$; as always, indices are to be read mod $|\cC|$. These reactions are of the same form as the ones between gametes in Eq.~\eqref{gametereactions}, after replacing the type fragments $\pi_{C_i}^{} \big (x^{(i+j-1)} \big )$ with the induced partitions $\cA_{i + j - 1}^{}|^{}_{C_i}$.

We finish by showing that the law of mass action of this chemical reaction network is precisely the nonlinear system from Theorem~\ref{reduction}.
\begin{theorem}\label{networkrepresentationcoeff}
The nonlinear system of ordinary differential equations that describes the dynamics of the coefficients in~\eqref{ansatz} can be written as the law of mass action for the \textnormal{CRN} comprised of all reactions~\eqref{coeffreactions}. More concisely, \eqref{nonlinearodes} is equivalent to
\begin{equation*}
\dot{a}_t = \sum_{\cC} \sum_{\cA_1,\ldots,\cA_{|\cC|}} \frac{\varrho(\cC)}{|\cC|} a_t (\cA_1) \cdot \ldots \cdot a_t(\cA_{|\cC|})  \Bigg ( \sum_{j = 1}^{|\cC|} \bigg ( \bigcup_{i=1}^{|\cC|} \cA_{i + j - 1}|_{C_i}- \cA_j \bigg ) \Bigg ),
\end{equation*}
where the summation is over $\bP(S)$.
\end{theorem}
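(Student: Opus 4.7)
The plan is to mimic the strategy used in the proof of Theorem~\ref{networkrepresentationgam} on a term-by-term basis. I will show that, for each fixed $\cC \in \bP(S)$, the identity
\begin{equation*}
\sum_{\cA \preccurlyeq \cC} \Bigg( \prod_{i=1}^{|\cC|} \sum_{\substack{\cD \in \bP(S) \\ \cD|_{C_i}=\cA|_{C_i}}} a_t(\cD) \Bigg) \cA \;-\; a_t
\;=\; \frac{1}{|\cC|} \sum_{\cA_1,\ldots,\cA_{|\cC|} \in \bP(S)} a_t(\cA_1) \cdots a_t(\cA_{|\cC|}) \sum_{j=1}^{|\cC|} \Bigg( \bigcup_{i=1}^{|\cC|} \cA_{i+j-1}|_{C_i} \;-\; \cA_j \Bigg)
\end{equation*}
holds; multiplying both sides by $\varrho(\cC)$ and summing over $\cC$ will then yield the claimed equivalence with \eqref{nonlinearodes}, after recognising that $\sum_{\cA} a_t(\cA)\,\cA = a_t$ on the left-hand side.

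For the gain terms (those involving the $\bigcup_i$-expressions), the key observation is that the map $\cA \mapsto (\cA|_{C_1},\ldots,\cA|_{C_{|\cC|}})$ is a bijection between $\{\cA \in \bP(S) : \cA \preccurlyeq \cC\}$ and tuples of partitions of the blocks $C_1,\ldots,C_{|\cC|}$, with inverse $\cA = \bigcup_i \cA|_{C_i}$. Exchanging the order of summation, the constrained double sum on the left-hand side collapses to the unconstrained sum
\begin{equation*}
\sum_{(\cD_1,\ldots,\cD_{|\cC|}) \in \bP(S)^{|\cC|}} a_t(\cD_1) \cdots a_t(\cD_{|\cC|}) \,\bigcup_{i=1}^{|\cC|} \cD_i|_{C_i}.
\end{equation*}
Exactly as in the proof of Theorem~\ref{networkrepresentationgam}, a cyclic relabeling $\cD_i \mapsto \cD_{i+j-1}$ (indices modulo $|\cC|$) leaves the product $a_t(\cD_1)\cdots a_t(\cD_{|\cC|})$ invariant while sending $\bigcup_i \cD_i|_{C_i}$ to $\bigcup_i \cD_{i+j-1}|_{C_i}$, so averaging over $j \in \{1,\ldots,|\cC|\}$ yields the first half of the right-hand side.

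For the loss terms (those involving $-\cA_j$), I will use the conservation identity $\sum_{\cA \in \bP(S)} a_t(\cA) = 1$, which follows either from the probabilistic interpretation of $a_t$ as the law of the partitioning process started at $\pmax$, or directly from summing \eqref{nonlinearodes} over $\cA$: the same decoupling argument shows the gain reduces to $\sum_\cB \varrho(\cB) \,(\sum_\cA a_t(\cA))^{|\cB|}$, which cancels against the loss when $\sum a_t = 1$. Granted conservation, cyclic symmetry in $j$ gives
\begin{equation*}
a_t \;=\; \sum_{\cA_j} a_t(\cA_j) \cA_j \;=\; \frac{1}{|\cC|} \sum_{j=1}^{|\cC|} \sum_{\cA_1,\ldots,\cA_{|\cC|}} a_t(\cA_1) \cdots a_t(\cA_{|\cC|}) \cA_j,
\end{equation*}
reproducing exactly the $-\cA_j$ contribution on the right-hand side of the target equation.

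The main obstacle is the combinatorial bookkeeping: establishing cleanly the bijection between refinements of $\cC$ and tuples of block-wise partitions, and keeping track of the cyclic index arithmetic modulo $|\cC|$ when swapping summations. Once these are in place, the remainder is a direct transcription of the computation behind Theorem~\ref{networkrepresentationgam}, with the induced partitions $\cA_{i+j-1}|_{C_i}$ taking the role of the gametic fragments $\pi_{C_i}^{}\bigl(x^{(i+j-1)}\bigr)$ and the restriction-and-join operation $\bigcup_i (\cdot)|_{C_i}$ taking the role of $\bigsqcup_i \pi_{C_i}^{}(\cdot)$.
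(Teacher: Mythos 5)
Your proposal is correct and follows essentially the same route as the paper's proof: the paper packages your bijection between $\{\cA \preccurlyeq \cC\}$ and tuples of block-wise restrictions as a Kronecker-delta identity (its Eq.~\eqref{uglyproduct}), then performs the same cyclic relabeling for the gain term and the same mass-conservation argument ($\sum_{\cA} a_t(\cA)=1$, inherited from the probabilistic interpretation) for the loss term. Your explicit flagging of the conservation law is, if anything, slightly more careful than the paper's ``same argument as in Theorem~\ref{networkrepresentationgam}''.
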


\begin{proof}
We will use the following identity (the proof of which will conclude the proof of the theorem),
\begin{equation}\label{uglyproduct}
\prod_{i=1}^{|\cB|}\sum_{\substack{\cC \in \bP(S) \\ \cC|_{B_i} = \cA|_{B_i}}} a(\cC)  = \frac{1}{|\cB|} \sum_{j=1}^{|\cB|} \sum_{\cA_1,\ldots,\cA_{|\cB|}} \delta \bigg (\cA, \bigcup_{i = 1}^{|\cB|} \cA_{i+j-1}|_{B_i} \bigg) \cdot a(\cA_1) \cdot \ldots \cdot a(\cA_{|\cB|}),
\end{equation}
valid for all $\cB \succcurlyeq \cA$ and all $a \in \RR^{\bP(S)}$,
where $\cB = \{B_1,\ldots,B_{|\cB|}\}$.
Inserting~\eqref{uglyproduct}, we see that the second sum on the right-hand side of Eq.~\eqref{nonlinearodes},
\begin{equation*}
\sum_{\cA}\sum_{\udo{\cB} \succcurlyeq \cA} \Bigg ( \prod_{i=1}^{|\cB|}\sum_{\substack{\cC \in \bP(S) \\ \cC|_{B_i} = \cA|_{B_i}}} a_t(\cC) \Bigg) \varrho (\cB) \cA,
\end{equation*}
can be written as 
\begin{equation}
\sum_{\cA} \sum_{\udo{\cB} \succcurlyeq \cA} \Bigg ( \frac{\varrho(\cB)}{|\cB|}\sum_{j =1}^{|\cB|} \sum_{\cA_1,\ldots,\cA_{|\cB|}} \delta \bigg (\cA,\bigcup_{i = 1}^{|\cB|} \cA_{i+j-1}|_{B_i}\bigg ) a_t(\cA_1)\cdot \ldots \cdot a_t(\cA_{|\cB|}) \cA \Bigg ).
\end{equation} 
Notice that the second argument of the Kronecker function is always finer than $\cB$. Thus, the whole summand vanishes whenever $\cB \succcurlyeq \cA$ does not hold. We may therefore ignore the restriction $\cB \succcurlyeq \cA$ in the inner sum, which allows us then to change the order of summation. After using the Kronecker function to perform the summation with respect to $\cA$, what remains is
\begin{equation*}
\sum_{\cB} \frac{\varrho(\cB)}{|\cB|} \sum_{\cA_1,\ldots,\cA_{|\cB|}} a(\cA_1)\cdot \ldots \cdot a(\cA_{|\cB|}) \sum_{j =1}^{|\cB|}\bigcup_{i = 1}^{|\cB|} \cA_{i+j-1}|_{B_i}.
\end{equation*}
Up to renaming $\cB$ with $\cC$, this is exactly the first part of the law of mass action for the CRN described above.  Using the same argument as in the proof of Theorem~\ref{networkrepresentationgam}, the first sum in Eq.~\eqref{nonlinearodes},
\begin{equation*}
-\sum_{\cB} \varrho(\cB)\sum_{\cA} a(\cA)\cA,
\end{equation*}
can be rewritten as 
\begin{equation*}
-\sum_{\cB} \frac{\varrho(\cB)}{|\cB|} \sum_{\cA_1,\ldots,\cA_{|\cB|}} a_t(\cA_1) \cdot \ldots \cdot a_t(\cA_{|\cB|}) (\cA_1 + \ldots + \cA_{|\cB|}).
\end{equation*}
Up to renaming $\cB$ with $\cC$, this completes the proof, provided Eq.~\eqref{uglyproduct} is correct. To show this, we start by expanding the right hand side,
\begin{equation*}
\begin{split}
\prod_{i=1}^{|\cB|}\sum_{\substack{\cC \in \bP(S) \\ \cC|_{B_i} = \cA|_{B_i}}} a(\cC)& = \sum_{(\cA_1,\ldots,\cA_{|\cB|}) \in \cG(\cA)} a(\cA_1) \cdot \ldots \cdot a(\cA_{|\cB|}) \\
&= \sum_{\cA_1,\ldots,\cA_{|\cB|}} \delta \bigg (\cA, \bigcup_{i = 1}^{|\cB|} \cA_{i}|_{B_i} \bigg)a(\cA_1) \cdot \ldots \cdot a(\cA_{|\cB|})
\end{split}
\end{equation*}
where $\cG(\cA)$ is the set of all $|\cB|$-tupels $(\cA_1,\ldots,\cA_{|\cB|})$ of partitions with $\cA_i|_{B_i} = \cA|_{B_{i}}$.
Since $\cA \preccurlyeq \cB$ implies that 
\begin{equation*}
\cA = \cA|_{B_1} \cup \ldots \cup \cA|_{B_{|\cB|}},
\end{equation*} 
$(\cA_1,\ldots,\cA_{|\cB|}) \in \cG(\cA)$ if and only if 
\begin{equation*}
\cA = \bigcup_{i=1}^{|\cB|} \cA_{i}|_{B_i}.
\end{equation*}
Now, as in the proof of Theorem~\ref{networkrepresentationgam}, we replace the product (for $1 \leq j \leq |\cB|$)
\begin{equation*}
a(\cA_1) \cdot \ldots \cdot a(\cA_{|\cB|})
\end{equation*}
by
\begin{equation*}
a(\cA_{1 - j + 1}) \cdot \ldots \cdot a(\cA_{|\cB| - j + 1})
\end{equation*}
and subsequently rename the summation indices. Thus,
\begin{equation*}
\begin{split}
\sum_{\cA_1,\ldots,\cA_{|\cB|}} \delta \bigg (\cA, \bigcup_{i = 1}^{|\cB|} \cA_{i}|_{B_i} \bigg)a(\cA_1) \cdot \ldots \cdot a(\cA_{|\cB|}) \\ = \frac{1}{|\cB|}\sum_{j=1}^{|\cB|} \sum_{\cA_1,\ldots,\cA_{|\cB|}} \delta \bigg (\cA, \bigcup_{i = 1}^{|\cB|} \cA_{i + j -1}|_{B_i} \bigg)a(\cA_1) \cdot \ldots \cdot a(\cA_{|\cB|}),
\end{split}
\end{equation*}
which finishes the proof of Eq.~\eqref{uglyproduct} and hence, of the theorem.
\end{proof}

\begin{figure}[h]
\psfrag{rate}{$\frac{\varrho(\cC)}{|\cC|}$}
\psfrag{plus}{\hspace{0.5mm}$+$}
\psfrag{plus2}{\raisebox{-1mm}{\hspace{0.5mm}$+$}}
\includegraphics[width=0.5 \textwidth]{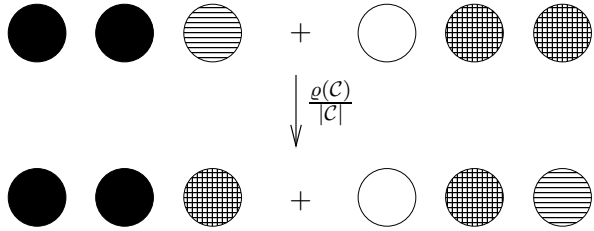}
\caption{\label{reactionwithsplit}
A reaction between two gametes with $3$ loci, corresponding to the partition $\cC = \{\{1,2\},\{3\}\}$. This means that the leading two sites of the left gamete on the top is combined with the trailing third site of the gamete on the right, and the leading two sites of the gamete to the right are combined with the trailing third of the left one. Here, the coloring of the sites is represented by different patterns. The partitions associated associated with the gametes are as follows. For the  substrate complex (top), we have $\cA_1 = \{\{1,2\},\{3\}\}$ and $\cA_2 = \{\{1\},\{2,3\}\}$, and the product complex (bottom) consists of 
$\cA_1|_{C_1} \cup \cA_2|_{C_2} = \{\{1,2\}\} \cup \{\{3\}\} = \{\{1,2\},\{3\}\}$ and
$\cA_1|_{C_2} \cup \cA_2|_{C_1} = \{\{3\}\} \cup \{\{1\},\{2\}\} = \{\{1\},\{2\},\{3\}\}$.
}
\end{figure}

Despite their similar appearance, there is one crucial difference between the CRN from Section~\ref{sec:networks}, and the one above. Because the products are pieced together from partitions of subsets induced by the substrates, the total number of blocks on the right-hand side is in general strictly larger than on the left-hand side. This implies that this network is \emph{not} reversible, and the question whether it can be interpreted as a gradient system remains open. 
The loss of reversibility appears to be the coarse-graining of the information in our system that we performed by transitioning from the (potentially infinite) set of types to the finite set of partitions. This is vaguely reminiscent of the common phenomenon in statistical mechanics where the projection of the underlying (high-dimensional) microscopic model to a smaller set of macroscopic degrees of freedom leads to a loss of reversibility.

\section*{Acknowledgements}

It is a pleasure to thank M. Baake, J. Hofbauer and C. Wiuf for helpful discussions. The thoughtful comments of two anonymous referees helped to improve the presentation and are thankfully acknowledged. This
work was supported by the German Research Foundation (DFG), 
within the SPP 1590.

\bigskip


\begin{thebibliography}{99}
\small

\bibitem{akin}
E. Akin, \textit{The Geometry of Population Genetics}, Springer, Berlin (1979).

\bibitem{baakebaake}
E. Baake and M. Baake, Haldane linearisation done right: Solving the nonlinear recombination equation the easy way, \textit{Discr. Cont. Dynam. Syst. A} \textbf{36} (2016) 6645--6656; \texttt{arXiv:1606.05175}.

\bibitem{baakebaakecan}
E. Baake and M. Baake, An Exactly Solved Model for Mutation, Recombination and Selection, \textit{Can. J. Math.} \textbf{55} (2003), 3--41; \texttt{arXiv:0210422}; and erratum, \textit{Can. J. Math} \textbf{60} (2008) 264. 

\bibitem{baakebaakereview}
E. Baake and M. Baake, Ancestral lines under recombination, to appear in \textit{Probabilistic Structures in Evolution}, eds. E. Baake and A. Wakolbinger, EMS Publishing House, in preparation.

\bibitem{baakebaakesalamat}
E. Baake, M. Baake and M. Salamat, The general recombination equation in continuous time and its solution, \textit{Discr. Cont. Dynam. Syst. A} \textbf{36} (2016) 63--95; and addendum, \texttt{arXiv:1409.1378}.

\bibitem{bellaiche}
A. Bella\"{i}che and J.J. Risler, \textit{Sub-Riemannian Geometry}, Birkh\"auser, Basel (1996).

\bibitem{buerger}
R. B\"urger, \textit{The Mathematical Theory of Selection, Recombination, and Mutation}, Wiley, Chichester (2000).

\bibitem{buerger09}
R.~B\"urger,
Multilocus selection in subdivided populations {I}. Convergence properties for weak or strong migration,
\textit{J.~Math.~Biol.} \textbf{58} (2009), 939--978. 


\bibitem{eundk}
S.N. Ethier and T.G. Kurtz, \textit{Markov Processes --- Characterization and Convergence}, Wiley, New York (1986).

\bibitem{feinberg}
M. Feinberg, \textit{Foundations of Chemical Reaction Network Theory}, Springer, Cham (2019). 

\bibitem{hofbauer} 
J. Hofbauer, Population dynamics and reaction systems --- some crossovers, \textit{Oberwolfach Reports} \textbf{28} (2017)  1753--1756. 

\bibitem{muellerhofbauer}
J. Hofbauer and S. M\"uller, Genetic recombination as a chemical reaction network, \textit{Math. Model. Nat. Phenom.} \textbf{10} (2015) 84--99; \texttt{arXiv:1503.01155}

\bibitem{lyubich}
Y.I. Lyubich, \textit{Mathematical Structures in Population Genetics}, Springer, Berlin (1992).

\bibitem{mielke}
A. Mielke, A gradient structure for reaction-diffusion systems and for energy-drift-diffusion systems, \textit{Nonlinearity} \textbf{24} (2011) 1329--1346.

\bibitem{walschap}
G. Walschap, \textit{Metric Structures in Differential Geometry}, Springer, New York (2004).

\bibitem{yong}
W. Yong, Conservation-dissipation structure of chemical reaction systems, \textit{Phys. Rev. E} \textbf{86} (2012) 067101:  1--3.













\end{thebibliography}
\end{document}